\renewcommand{\maketitle}{%
  \noindent
  \hspace*{-1cm}%
  \begin{minipage}{\dimexpr\textwidth + 2cm\relax}%
    \centering
    {\LARGE\bfseries \@title \par}%
    \vskip 1.5em
    {\large \@author \par}%
    \vskip 0.5em
    {\large \@date \par}%
  \end{minipage}%
  \vskip 2em
}
\newtheorem{theorem}{Theorem}[section]
\newtheorem{corollary}[theorem]{Corollary}
\newtheorem{proposition}[theorem]{Proposition}
\newtheorem{lemma}[theorem]{Lemma}
\newtheorem*{theorem*}{Theorem}
\theoremstyle{remark}
\newtheorem{remark}[theorem]{Remark}
\theoremstyle{definition}
\newtheorem{definition}[theorem]{Definition}
\newcommand{\1}{\mathbbm{1}}
\newcommand{\cH}{\mathcal{H}}
\newcommand{\cO}{\mathcal{O}}
\newcommand{\cL}{\mathcal{L}}
\newcommand{\de}{\operatorname{d}}
\DeclareMathOperator{\e}{\mathbb{E}}
\DeclareMathOperator{\p}{\mathbb{P}}
\title{The Supermarket Model on a Dynamic Regular Hypergraph}
\author[1]{John Fernley \orcidlink{0000-0002-6635-4341}}
\author[2]{Bal\'azs Gerencs\'er \orcidlink{0000-0002-3885-4146}}
\affil[1]{Centre for Research in Statistical Methodology, Department of Statistics,}
\affil[ ]{University of Warwick,
Coventry,
United Kingdom}
\affil[ ]{\href{mailto:john.fernley@warwick.ac.uk}{\rm john.fernley@warwick.ac.uk}}
\affil[2]{HUN-REN Alfr\'ed R\'enyi Institute of Mathematics,
            Budapest, %
            Hungary}
\affil[ ]{ELTE E\"otv\"os Lor\'and University,
            Budapest, %
            Hungary}
\affil[ ]{\href{mailto:gerencser.balazs@renyi.hu}{\rm gerencser.balazs@renyi.hu}}
\date{\today}
\begin{document}
\maketitle
\begin{abstract}
\noindent
The supermarket model is a system of $n$ queues each with serving rates $1$ and arrival rates $\lambda$ per vertex, where tasks will move on arrival to the shortest adjacent queue. 
We consider the supermarket model in the small $\lambda$ regime on a large dynamic configuration hypergraph with stubs swapping their hyperedge membership at rate $\kappa$.

This interpolates previous investigations of the supermarket model on static graphs of bounded degree (where an exponential tail produces a logarithmic queue) and with independently drawn neighbourhoods (where the ``power of two choices'' phenomenon is a doubly logarithmic queue). We find with high probability, over any polynomial timeframe, the order of the longest queue is
\[
\log\log n + \frac{\log n}{\log \kappa} \wedge \log n
\]
so in the sense of controlling the order of maximal queue length, we identify which speed orders are sufficiently fast that there is no gain in moving the environment faster. Additional results describe mixing of the system and propagation of chaos over time.

\vspace{0.5em}

{\scriptsize
\noindent
2020 Mathematics Subject Classification: 60K25; 82C20; 90B22.

\noindent
Keywords: maximum queue length; load balancing; resource pooling; mixing times; propagation of chaos.
}
\end{abstract}

\refstepcounter{section}
The most efficient way to organise queueing among $n$ rate $1$ Poisson servers is simply to assign tasks to join the shortest queue (JSQ). Then, for any $\lambda \in (0,1)$ when tasks arrive at Poisson rate $\lambda n$ we find mean waiting time $0$ as $n\rightarrow\infty$. Further, we have waiting time stochastically no longer for JSQ than any other queue assignment system.

Realistically however, in some large systems, this protocol of joining the shortest queue might require a prohibitively large communication overhead that moreover needs to happen very fast to not delay service.
For this reason, the supermarket model has been considered by \cite{mukherjee2018asymptotically} on a static graph with Erd\H{o}s-R\'enyi distribution of mean degree $\mu$. They find the same limiting SDE for the rescaled proportions of queue lengths as on a complete graph (i.e. for JSQ), but for this they must take $\mu =\omega(\sqrt{n}\log n)$.

\begin{definition}[Supermarket model]
The supermarket model on a (dynamic hyper-)graph puts a Poisson server of rate $1$ at every vertex, and also at every vertex a Poisson process of task arrivals at rate $\lambda>0$. These determine the queue lengths $Q_t^{(v)}$ at each vertex $v \in [n]$ and time $t\geq 0$ with the following interaction: at the arrival of each task, the task joins a uniform queue among the shortest in the neighbourhood.
\end{definition}

Alternatively to that polynomially large degree, a lesser communication overhead can be achieved by selecting $d\in \mathbb{N}$ other uniform random queues to compare to on every arrival and joining the shortest of those, a.k.a. JSQ$(d+1)$. This was seen in \cite{vvedenskaya96} to immediately bring the exponential tail of the stationary queue length for $d=0$ (the system of independent queues) down to a doubly exponential tail.

This doubly exponential tail corresponds over polynomial timeframe to a longest queue of length $\Theta(\log \log n)$, as was controlled very precisely in \cite{mcdiarmid2006}. However in terms of system architecture we are now required to move every incident connection at every arrival, and still this must happen before the task can be assigned. Some compromise can be achieved with a dynamic graph that moves edges independently between arrivals, and in this article we see that it is not necessary to go all the way to $\kappa=\infty$ (or JSQ$(d+1)$) to control the longest queue at length $O(\log \log n)$.

The supermarket model has been recently considered in a dynamic environment in \cite{goldsztajn2023load}, however their model must resample the whole graph at once from some fixed graph law. This is a different way to slow down the $\kappa=\infty$ supermarket model, necessitating of course a factor $n$ more edges changed to break a neighbourhood of large queues. Regardless, their interest is in a different set of results, primarily in the fluid limit, and the longest queue is not addressed.

To be specific, rather than resampling the full graph, we put our supermarket model on the following random hypergraph, where each arrival to a vertex is joining the shortest of $2r$ available queues. When $r=1$, this is just a uniform random matching.

\begin{definition}[Random equal partition]\label{def_partition}
For $r \in \mathbb{N}$ take vertex set $[n]=\{1,\dots,n\}$ with $n$ divisible by $2r$. Uniformly split $[n]$ into disjoint sets of size $2r$, which are thought of as hyperedges.
\end{definition}

This is the stationary distribution of the following graph dynamic.

\begin{definition}[Dynamic equal partition]\label{def_dynamic_partition}
From any state that could be generated in Definition \ref{def_partition}, each vertex elects to move at rate $\kappa$. It then selects a uniform vertex in $[n]$ and swaps its partition membership with that vertex. In this way the hypergraph changes at exactly rate $\kappa (n - 2r)$.
\end{definition}

What we have arrived at is a natural hypergraph dynamic, and more a natural modification to the supermarket model of \cite{mcdiarmid2006,mcdiarmid_chaos}: their model samples independently a comparison set for each arrival, but here we resample the vertices one-by-one to look at the comparative effect of a slower moving hypergraph.

\section{Main results}\label{sec_results}

Our main result is on the longest queue observed. We state it in the general Landau notation, with the addition of a subscript as in the two-sided order $\Theta_{\p}$ if the implied inequalities only hold with high probability.

\begin{theorem}\label{thm_queuelength}
Take $C > 0$, $r\geq 1$ and $\lambda>0$ sufficiently small. 
On the dynamic equal partition of \ref{def_dynamic_partition}, the supermarket model $(Q^{(v)}_t)_{v \in [n]}$ with arrival rate $\lambda$ has

\[
\max_{t\in [0,n^C]}
\max_{v\in [n]}
Q^{(v)}_t=
\begin{cases}
\Theta_{\p}\left(\log \log n\right), &\kappa=n^{\Omega\left(\nicefrac{1}{\log \log n}\right)},\\[5pt]%
\Theta_{\p}\left(\frac{\log n}{\log \kappa}\right), &\kappa\in\left(\omega(1),n^{o\left(\nicefrac{1}{\log \log n}\right)}\right),\\[5pt]
\Theta_{\p}\left(\log n\right), &\kappa=O(1),%
\end{cases}
\]
for $\kappa=\kappa(n)\in[0,\infty)$.
\end{theorem}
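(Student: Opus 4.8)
The plan is to prove matching upper and lower bounds on $\mathcal M := \max_{t\le n^C}\max_v Q^{(v)}_t$. The structural fact that makes this tractable is that the hypergraph process of Definition~\ref{def_dynamic_partition} evolves \emph{autonomously} of the queues, each $2r$-set losing a member and gaining a fresh, near-uniform replacement at rate $\Theta(r\kappa)$; so $\kappa^{-1}$ is the single scalar that will interpolate the two known extremes, namely the static disjoint-$2r$-clique graph at $\kappa=0$ (geometric queue tail, $\Theta(\log n)$ maximum) and the independently resampled neighbourhoods of \cite{mcdiarmid2006} at $\kappa=\infty$ (doubly geometric tail, $\Theta(\log\log n)$ maximum).

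\textbf{Upper bound: layered induction.} Writing $S_k(t)=\#\{v: Q^{(v)}_t\ge k\}$, I would prove by induction on $k$ that with probability $1-o(1)$ one has $\sup_{t\le n^C}S_k(t)\le a_k n$ simultaneously for all $k$, where $(a_k)$ is a deterministic sequence with $a_0=1$ satisfying a recursion that interpolates the $\kappa=\infty$ power-of-choices recursion $a_{k+1}=B\lambda a_k^{2r}$ with the $\kappa=0$ geometric recursion $a_{k+1}=B\lambda^{2r}a_k$: concretely, $a_{k+1}=B\lambda\, a_k (a_k+\kappa^{-1})^{2r-1}$ works, for a constant $B=B(r)$ and $\lambda$ small enough that the arising ratios are bounded below $1$ (the genuinely static case $\kappa=0$ is handled separately as $n/2r$ independent $2r$-queue systems run for time $n^C$, each climbing to height $\Theta(\log n)$). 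The inductive step is a drift estimate: $S_{k+1}$ increases only by up-crossings $k\to k+1$, at total rate $\lambda\,\#\{v:Q^{(v)}_t=k,\ \text{all }2r-1\text{ partners of }v\text{ are}\ge k\}$, and decreases by service at rate $\#\{v:Q^{(v)}_t=k+1\}$; conditioning on the level-$k$ bound and on the hypergraph trajectory, the expected number of ``bumpable'' vertices is $\lesssim a_k n\,(a_k+\kappa^{-1})^{2r-1}$ --- the $a_k^{2r-1}$ part from a recently refreshed neighbourhood being a near-uniform sample, the $\kappa^{-1}$ part being the slow-mixing correction by which a stale clump of long queues persists over the relevant window --- and a Freedman-type martingale bound promotes this to a bound holding uniformly on $[0,n^C]$. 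A union bound over the $O(\log n)$ levels in play closes the induction. Solving the recursion: $a_k$ falls doubly geometrically while $a_k\gtrsim\kappa^{-1}$ (that is $\Theta(\log\log n\wedge\log\log\kappa)$ steps), then geometrically with ratio $\asymp\lambda\kappa^{-(2r-1)}$ (a further $\Theta(\tfrac{\log n}{\log\kappa}\wedge\log n)$ steps) until $a_kn<1$; so the least such $k$ is $\Theta\!\bigl(\log\log n+(\tfrac{\log n}{\log\kappa}\wedge\log n)\bigr)$, matching the three cases, and Markov at that level bounds $\mathcal M$ from above.

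\textbf{Lower bounds.} Each regime needs a construction, carried out by a Paley--Zygmund second-moment argument on the number of vertices that reach the target level, the variance being controlled since distinct ``witness'' structures share few vertices. The universal $\Omega(\log\log n)$ bound comes from running the power-of-choices recursion as a lower bound (with $\gtrsim a_k n$ vertices at level $\ge k$, a freshly formed neighbourhood is all-$\ge k$ with probability $\gtrsim a_k^{2r-1}$, producing level-$(k+1)$ vertices; iterate $\Theta(\log\log n)$ times before populations reach $O(1)$). The intermediate $\Omega(\tfrac{\log n}{\log\kappa})$ bound instead drives growth through the slow-mixing term: along a typical hyperedge's $\asymp n^C\kappa$ successive epochs (each of length $\asymp\kappa^{-1}$), an epoch begins with a freshly formed $2r$-set which is a clump at level $j$ with probability $\gtrsim a_j^{2r}$ and, surviving the epoch, receives a bumping arrival with probability $\asymp\lambda\kappa^{-1}$, so the level-$j$ populations grow one level at a time, reaching the target after $\Theta(\tfrac{\log n}{\log\kappa})$ levels. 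The $\Omega(\log n)$ bound for $\kappa=O(1)$ is the bounded-$\kappa$ limit of this, or, more robustly, follows by isolating one fixed neighbourhood that over bounded-length windows performs a downward-biased excursion and so climbs to height $\Theta(\log n)$ across $\Theta(\log n)$ windows.

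\textbf{Main obstacle.} The crux is the inductive step of the upper bound, and within it the decorrelation estimate $\mathbb{P}(\text{all partners of }v\text{ are}\ge k\mid Q^{(v)}=k)\lesssim(a_k+\kappa^{-1})^{2r-1}$: a vertex is long precisely because its slowly refreshing neighbourhood has tended to be long, so one must use the autonomy of the hypergraph, decompose a vertex's neighbourhood by its last refresh time, bound the contribution of long-lived clumps, and do all this while upgrading expectations to bounds holding simultaneously over the polynomial window and all $O(\log n)$ levels --- forcing exponential-tail concentration rather than mere second moments. Pinning the crossover exponent and matching it with the lower-bound construction, especially isolating hyperedge persistence as the dominant mechanism in the intermediate regime, is the secondary difficulty.
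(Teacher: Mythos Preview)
Your outline has the right recursion and correctly locates the difficulty, but the route is genuinely different from the paper's, and the step you flag as the ``main obstacle'' is exactly what the paper handles by a mechanism absent from your sketch.

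\textbf{Upper bound.} The paper does not run a layered induction on the empirical counts $S_k(t)$ with a Freedman-type bound. Instead it (i) reduces to $d=r=1$ by a stochastic domination (Proposition~\ref{prop_domination}: cutting each $2r$-hyperedge into $r$ matching edges can only lengthen queues), (ii) analyses the one-queue mean-field limit $\eta_\kappa=\lim_{n\to\infty}\pi$ directly, and (iii) transfers back to the finite system by concentration of the empirical distribution around $\eta_\kappa$. Step (ii) is Proposition~\ref{prop_stat_upper_bound}, which introduces a {\tt new}/{\tt old} splitting: a vertex is {\tt new} if it has had no arrival since its last refresh, and at $n=\infty$ a {\tt new} vertex's partner is a genuine independent $\eta_\kappa$-sample, while the total {\tt old} mass is $O(\kappa^{-1})$. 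This yields exactly your two-phase tail---doubly geometric down to level $R=\Theta(\log\log\kappa)$, then geometric with ratio $\Theta(\lambda/\kappa)$---but at the mean-field level, where no conditional-decorrelation estimate is needed. Step (iii) (Lemma~\ref{lem_moment_empirical} and Corollary~\ref{cor_concentration}) is where the paper pays for dependence, via the backward-exploration information sets $\cH^{(v)}$ of Section~\ref{sec_dependence}: for $\lambda$ small these are subcritical with exponential size tails (Proposition~\ref{prop_max_cpt}), so moments of the empirical measure reduce to counting label collisions.

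Your conditional bound $\p(\text{all partners}\ge k\mid Q^{(v)}=k)\lesssim (a_k+\kappa^{-1})^{2r-1}$ is precisely what the paper avoids. Decomposing by last refresh time does not by itself give it: a freshly arrived partner is a uniform \emph{vertex}, but its \emph{queue length} is not independent of $\{Q^{(v)}=k\}$ without some control on how far back in time that dependence reaches; a per-level martingale argument does not decorrelate this conditioning. The paper's subcriticality of $\cH$ is the missing ingredient. If you import that machinery to justify treating fresh partners as $\eta_\kappa$-samples up to polynomially small error, your induction would close, but at that point you are repackaging the paper's proof.

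\textbf{Lower bound.} The paper uses no second-moment arguments. For $\kappa=O(1)$ it isolates $\Omega(n)$ disjoint neighbourhoods, lower-bounds each by a biased walk, and waits for one to see no downstep or update in time $\epsilon\log n$. For intermediate $\kappa$ it exhibits a stochastic lower bound on a single queue by a \emph{zero-on-update} process on the $1$-ball (each refresh resets the affected neighbour to $0$) and computes its stationary tail explicitly (Proposition~\ref{prop_zeroqueue}). For large $\kappa$ it iterates the one-step inequality $\pi(k+1)\ge c\,\pi(k)^m$ coming from the event that $m-1$ refreshes all land on length-$k$ neighbours before the next arrival. In each case the high-probability statement over $[0,n^C]$ follows from the mixing of Proposition~\ref{prop_mixing}, which furnishes $n^{\Omega(1)}$ near-independent samples of the stationary mass; no variance control is needed. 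Your Paley--Zygmund route is viable in principle but is more work than the paper's single-queue stochastic bounds.
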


This can be compared to the following result for the infinite speed version of this model. To use our notation but remove the requirement of even sized hyperedges, we will allow half-integers $r$.

\begin{theorem}[{\cite[Theorem 1.3]{mcdiarmid2006}}]\label{thm_mcdiarmid_length}
Take $r\in \tfrac{1}{2}\mathbb{N}^+$, $C > 0$ and $\lambda\in(0,1)$. 
The dynamic environment makes $2r-1$ independent selections of a uniform neighbour on every arriving task. Then we find the supermarket model $(Q^{(v)}_t)_{v \in \mathbb{N}}$ with arrival rate $\lambda$ has 
\[
\max_{t\in [0,n^C]}
\max_{v\in [n]}
Q^{(v)}_t=
\log_{2r} \log n+O_{\p}(1).
\]
\end{theorem}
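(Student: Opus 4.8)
The plan is to reconstruct the argument behind this (cited) estimate. Throughout write $d:=2r\ge 2$ for the number of queues inspected at each arrival, $u_i(t):=\#\{v:Q^{(v)}_t\ge i\}$ for the number of queues of length at least $i$ and $v_i:=u_i/n$. The guiding heuristic is the mean-field fixed point: summing the transition rates of $(v_j)_{j>i}$ at equilibrium gives the \emph{exact} recursion $v_{i+1}=\lambda v_i^d$, so $v_i^{*}=\lambda^{(d^i-1)/(d-1)}$ and $n v_i^{*}=1$ precisely at $i=\log_d\log n+O(1)$. The real task is to promote this to two-sided bounds holding with high probability \emph{uniformly over} $[0,n^C]$, where a rare upward fluctuation gets polynomially many attempts.

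The backbone, used for both directions, is a quantitative fluid approximation proved level by level. Given control at level $i-1$, the counter $u_i$ jumps up at rate $\lambda(u_{i-1}-u_i)(u_{i-1}/n)^{d-1}\le \lambda u_{i-1}^d n^{1-d}$ and down at rate $u_i-u_{i+1}$, so I would write $u_i$ as its compensator plus a martingale and apply a Freedman-type deviation bound, with a union bound over the $O(n^C)$ excursions, to carry $u_i(t)=n v_i^{*}(1+o(1))$ to all $t\le n^C$ with super-polynomially small failure probability at that level; for the \emph{lower} direction one also invokes $\e[v_i^d]\ge(\e[v_i])^d$ (convexity) and that the loss $\lambda n\,\e[v_{i+1}^d]$ is negligible by the upper bound. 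Because the recursion constant is exactly $\lambda$, the multiplicative errors do not compound over the $\Theta(\log\log n)$ levels one must climb before reaching a level $i_0=\log_d\log n+O(1)$ at which $n v_{i_0}^{*}=\Theta(n^{3/4})$ — there even the crude additive concentration $|u_{i_0}(t)-n v_{i_0}^{*}|=O(\sqrt n\,\mathrm{polylog}\,n)$ suffices.

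For the \textbf{lower bound} I would run the chain from empty: the fluid recursion fills the levels one at a time, each in $O(1)$ time, so after a burn-in of $O(\log\log n)=o(n^C)$ time every level $i\le\log_d\log n$ satisfies $\e[u_i]\ge\tfrac12 n(c\lambda)^{(d^i-1)/(d-1)}$ for a constant $c=c(d)>0$; since $n v_i^{*}$ is doubly exponentially small in $i$, the level with $\e[u_{i_0}]=\Theta(n^{3/4})$ is still $\log_d\log n-O(1)$, and there the $O(\sqrt n\,\mathrm{polylog}\,n)$ concentration forces $u_{i_0}(t)\ge1$ with high probability. For the \textbf{upper bound} I would continue above $i_0$ (where now $u_{i_0}(t)\le2n^{3/4}$ throughout $[0,n^C]$) by a first-moment argument: each queue presently of length $\ge i$ can be charged injectively to a distinct earlier arrival that moved it from level $i-1$ to $i$, so $u_i(t)$ is at most the number $A_i$ of arrivals in $[0,n^C]$ that join a queue of length exactly $i-1$, and $\e[A_{i+1}]\le\lambda n^{1-d}\int_0^{n^C}\e[u_i(t)^d]\,dt$. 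Using the key point that once its birth rate drops below $1$ the chain $u_i$ is genuinely sparse, so $\e[u_i(t)^d]=O(\e[\,\text{birth rate of }u_i\text{ at }t\,])$, these bounds contract by a factor $O(n^{1-d})$ per level, so $\e[A_{i_0+j}]\to0$ once $j>C/(d-1)+O(1)$ and with high probability no queue reaches $\log_d\log n+O_C(1)$.

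The hard part will be this uniform-in-time concentration: the per-level deviation estimates have to be strong enough to survive a union bound over a polynomially long window, and to be propagated through $\Theta(\log\log n)$ levels without the implied constants degrading the threshold by more than $O(1)$. It is the exactness of the recursion $v_{i+1}=\lambda v_i^d$, together with the sparse-chain moment estimate for the topmost $O_C(1)$ levels, that lets the two bounds meet at $\log_d\log n+O_{\p}(1)$.
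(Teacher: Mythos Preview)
This theorem is not proved in the paper: it is quoted verbatim as \cite[Theorem~1.3]{mcdiarmid2006} and serves only as a point of comparison for the paper's own Theorem~\ref{thm_queuelength}. There is therefore no ``paper's own proof'' to compare your proposal against.

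For what it is worth, your sketch is broadly in the spirit of the Luczak--McDiarmid approach in the original reference: establish the fluid fixed point $v_i^{*}=\lambda^{(d^i-1)/(d-1)}$, propagate concentration level by level with martingale/Freedman-type bounds that survive a union bound over the polynomial time window, and close the last $O_C(1)$ levels above $\log_d\log n$ with a first-moment (expected-occupancy) argument. That is indeed how the cited result is obtained, so as a reconstruction it is on the right track. A couple of your heuristics would need tightening if you actually wrote this up: the ``injective charging'' of queues of length $\ge i$ to arrivals is not literally correct (a queue can cross level $i$ many times), and the step $\e[u_i(t)^d]=O(\e[\text{birth rate}])$ for sparse levels is the delicate part of McDiarmid's argument and requires an explicit coupling or a witness-tree style bound rather than being a one-liner. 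But none of this bears on the present paper, which simply invokes the result.
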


It can be seen that ``taking $\kappa=\infty$'' has allowed more precise understanding of the localisation, but note of course that this result connects to the large $\kappa$ order $\Theta_{\p}\left(\log \log n\right)$ in Theorem \ref{thm_queuelength}.

We write $\pi=\pi_\kappa^{(n)}$ for the stationary distribution of a queue, by exchangeability the same for any queue, and the pointwise limit
$
\eta_\kappa=\lim_{n\rightarrow\infty}\pi.
$ 

By dominated convergence we have that $\pi\stackrel{L^1}{\rightarrow}\eta_\kappa$ as $n\rightarrow\infty$. In the case $r=1$, we also find concentration of the empirical measure in Corollary \ref{cor_concentration}---from this a law of large numbers would follow immediately. %

For the infinite speed queueing system, which is really a complete graph model, there is the following result on long-term closeness to independence. This result is described as \emph{propagation of chaos} \cite{MR1108185} because the opposite of chaoticity is dependence.

\begin{theorem}[{\cite[Theorem 1.4]{mcdiarmid_chaos}}]
In the same context as Theorem \ref{thm_mcdiarmid_length}, we have at stationarity 
\[
\de_{\rm TV}
\left(
\cL (Q^{(1)} , \dots , Q^{(x)})
,\eta_\infty^{\otimes x}
\right)
=O\left(
\frac{1}{n}\log^2 n (2\log \log n)^{x+1}
\right)
\]
uniformly over $x \in [n]$.
\end{theorem}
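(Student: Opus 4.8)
The plan is to prove this by a coupling of witness-tree type, of the kind developed for ``power of two choices'' results. First I would realise the infinite-speed dynamics of Theorem~\ref{thm_mcdiarmid_length} through independent Poisson clocks --- a rate-$1$ service clock and a rate-$\lambda$ arrival clock at each vertex, and attached to each arrival an independent uniform $(2r-1)$-tuple of comparison vertices --- and, on the same space, build $x$ mutually independent queues $\tilde Q^{(1)},\dots,\tilde Q^{(x)}$, each run as a single tagged queue in the $n\to\infty$ mean field (its own rate-$\lambda$ arrivals, each kept iff the queue is weakly shortest against $2r-1$ \emph{fresh} samples from $\eta_\infty$), so that $\tilde Q^{(i)}\sim\eta_\infty$ at the observation time. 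The coupling is to be arranged so that a ``good event'' $G$ forces $Q^{(i)}=\tilde Q^{(i)}$ for all $i$; then the coupling inequality gives $\de_{\rm TV}(\cL(Q^{(1)},\dots,Q^{(x)}),\eta_\infty^{\otimes x})\le\p(G^c)$, and it remains to estimate $\p(G^c)$. The case $x=1$ here is exactly the marginal statement $\de_{\rm TV}(\pi,\eta_\infty)=O(n^{-1}\log^2 n(\log\log n)^2)$.

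To define $G$, I would fix the recursive backward exploration that reconstructs $Q^{(i)}$ at the observation time: start from vertex $i$; whenever the queue at an already-exposed vertex $w$ is needed at a time $s$, expose the service and arrival events at $w$ back to the last instant $\le s$ at which $w$ was empty, and for every arrival in that busy period expose its $2r-1$ comparison vertices (and similarly the $O(1)$-rate stream of foreign arrivals that sampled $w$) as new vertices to be explored. Write $\cI_i$ for the set of vertices exposed, and set $G=\{\text{the }\cI_i\text{ are pairwise disjoint and, within each }\cI_i,\text{ all exposed comparison vertices are distinct}\}$. On $G$ every comparison consulted in reconstructing $Q^{(i)}$ is against a vertex seen exactly once, which is exactly what the idealised dynamics of $\tilde Q^{(i)}$ demands, so matching service clocks, arrival clocks and uniform samples of the two systems along the exploration forces $Q^{(i)}=\tilde Q^{(i)}$ there.

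Then I would bound $\p(G^c)$ in two stages. First, each $\cI_i$ is small with very high probability: its exploration is dominated by a branching process in which each node --- a queue whose length, bounded by $O(\log\log n)$ via Theorem~\ref{thm_mcdiarmid_length}, also bounds the length of its busy period --- spawns $(2r-1)\times O(\log\log n)$ children but a given child has positive queue length only with probability $O(\lambda^{c})$, so for $\lambda$ small the process is subcritical and $|\cI_i|$ has an exponential tail; this yields $|\cI_i|\le M=O(\log n)$ off an event of probability $O(n^{-1}\log^2 n)$, with the $\log\log n$ factors in the busy-period lengths and in the depth of the tree (the depth parameter decreases by one per level, from $O(\log\log n)$) accounting for the powers of $2\log\log n$. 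Secondly, on $\{|\cI_i|\le M\ \forall i\}$ the event $G^c$ forces two of the $O(M)$ freshly-drawn samples to coincide; exploring the trees one at a time and applying, at each new sample, a union bound of the form (size of the set to avoid)$/n$, this has probability $O(M^2/n)=O(n^{-1}\log^2 n)$ for one pair, and summing over the $\binom{x}{2}+x$ relevant pairs --- the set to avoid when tree $i$ is explored being $\bigcup_{j<i}\cI_j$, which is where the extra $2\log\log n$ per previously-explored tree enters --- gives the claimed $O(n^{-1}\log^2 n\,(2\log\log n)^{x+1})$, uniformly in $x\in[n]$.

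The hard part will be this last bookkeeping: one must control the law of the exploration \emph{conditioned} on the very events it is being used to prove --- that all queues met along the way are $O(\log\log n)$, and that tree $i$ avoids $\bigcup_{j<i}\cI_j$ --- and show that neither conditioning destroys subcriticality nor fattens the size tail, uniformly in $i$, so that the collision count closes at exactly $(2\log\log n)^{x+1}\log^2 n/n$ and not at something polynomially worse in $x$. A lesser, but still real, point is to justify the $O(\log\log n)$ reconstruction window \emph{simultaneously} over all $O(M)$ vertices of the influence sets (this is where the maximum-queue estimate of Theorem~\ref{thm_mcdiarmid_length} is invoked with a union bound), and to confirm that ``tree-like and pairwise disjoint'' really is the complete list of obstructions to the clock-by-clock coupling. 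An alternative, less transparent route would estimate the $x$-point correlation functions of the empirical queue-length measure directly, using its concentration together with a martingale bound on the transition rates; but the witness-tree coupling above is conceptually cleaner and is the one I would pursue.
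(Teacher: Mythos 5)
This statement is not proven in the paper at all: it is quoted verbatim from \cite{mcdiarmid_chaos} (Theorem 1.4 there) purely as background, to contrast with the paper's own Proposition \ref{prop_indep}. So there is no in-paper proof to compare against. The nearest in-paper analogue is the proof of Proposition \ref{prop_general_indep}, which runs the same kind of backward exploration you describe (the information sets $\cH^{(i)}$ of Section \ref{sec_dependence}), but exploits the standing small-$\lambda$ assumption to make the exploration a subcritical Galton--Watson tree with exponential tails (Proposition \ref{prop_max_cpt}, Corollary \ref{cor_h_squared}), whence a clean second-moment/Markov bound of $O(x^2/n)$ on the label-clash probability with no $\log\log n$ factors. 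Your sketch is the right general strategy for the quoted theorem and is faithful to how such propagation-of-chaos results are actually proven.

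Two substantive reservations remain. First, the quoted theorem holds for all $\lambda\in(0,1)$, whereas your subcriticality argument (``a given child has positive queue length only with probability $O(\lambda^c)$, so for $\lambda$ small the process is subcritical'') only covers small $\lambda$; for $\lambda$ near $1$ the busy-period domination is not obviously subcritical and one must instead lean on the $O(\log\log n)$ maximum-queue bound to truncate both the depth and the per-node offspring of the exploration --- this is precisely where the $(2\log\log n)^{x+1}$ and $\log^2 n$ factors come from, and it is the part you explicitly defer (``the hard part will be this last bookkeeping''). Second, the circularity you flag --- conditioning the law of the exploration on the very high-probability events (bounded queues, disjointness of earlier trees) used to control it --- is a real issue and is the technical core of the original argument, not a routine detail. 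As a proof the proposal is therefore incomplete at exactly the point where the stated constants are produced; as a reconstruction of the strategy of \cite{mcdiarmid_chaos} it is accurate, and it correctly includes the often-missed requirement to explore the foreign arrivals that sampled an exposed vertex.
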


This problem, in spite of the presence of a dynamic graph, is much simplified by our small $\lambda$ assumption and so we can state the following additional result.

\begin{proposition}\label{prop_indep}
In the same context as Theorem \ref{thm_queuelength}, we have at stationarity
\[
\de_{\rm TV}
\left(
\cL (Q^{(1)} , \dots , Q^{(x)})
,\eta_\kappa^{\otimes x}
\right)
=O\left(
\frac{x^2}{n}
\right)
\]
uniformly over $x \in [n]$.
\end{proposition}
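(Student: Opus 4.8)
The plan is to bound the total variation distance by coupling the true $x$-queue marginal with $x$ independent copies drawn from $\pi$ (not $\eta_\kappa$) and then pay the cheap $L^1$ cost $\|\pi-\eta_\kappa\|_1$ separately at the end. The natural coupling runs two systems: the real supermarket model on the dynamic equal partition, and a ``fictitious'' system in which queues $1,\dots,x$ each live in their own private copy of the environment so that they evolve independently, each marginally as a single queue in the full $n$-vertex dynamic partition (hence with law $\pi$ at stationarity). These two processes are coupled so that they agree until the first time a ``bad'' interaction occurs, i.e.\ until the histories of queues $1,\dots,x$ become entangled through either sharing a hyperedge or an arrival comparing one of them against another. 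By the standard coupling inequality, $\de_{\rm TV}$ is at most the stationary probability of such entanglement, and the task reduces to a union bound over pairs $i,j \in [x]$ of the chance that queues $i$ and $j$ ever ``meet'' in the relevant sense over the timescale that matters for stationarity.

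The key steps, in order. First, set up the state space and note that the single-queue stationary law $\pi$ is well defined by exchangeability (already asserted in the excerpt), so it suffices to compare with $\pi^{\otimes x}$ and then add $\|\pi - \eta_\kappa\|_1 \le$ (something $o(1)$, in fact absorbed) — actually since the stated bound is $O(x^2/n)$ one should be slightly careful: I would instead prove the cleaner statement with $\pi^{\otimes x}$ in place of $\eta_\kappa^{\otimes x}$ and observe the two differ by at most $x\|\pi-\eta_\kappa\|_{\rm TV}$, which the small-$\lambda$ regime makes $O(x/n)$, consistent with the claim. Second, identify the ``meeting'' events: two tagged vertices $i,j$ share a hyperedge with probability $\Theta(r/n)$ at stationarity, and over any fixed window they join and leave hyperedges at rate $O(\kappa)$, but the point is that to influence each other's queue \emph{length distribution} they must actually share a hyperedge at a moment when a task arrives, or be jointly visible through a chain; the per-pair probability of this is $O(1/n)$ uniformly in $\kappa$ because each rewiring step picks a uniform partner. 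Third, because the small-$\lambda$ assumption keeps queues short (indeed Theorem~\ref{thm_queuelength} controls the maximum), the relaxation time of a single queue — and of the finite projection — is polynomial, and one checks that the coupling can be run from stationarity so that no extra ``burn-in'' cost is incurred: the meeting probability is genuinely a stationary, time-zero-to-equilibrium quantity of order $1/n$ per pair. Fourth, union bound over the $\binom{x}{2}$ pairs to get $O(x^2/n)$.

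The main obstacle I expect is step three: making precise that the coupling between the true system and the product-of-independent-copies system survives \emph{at stationarity} rather than only from a clean deterministic start. Starting both chains from their respective stationary laws, the environments are already correlated in the true system (the $x$ tagged vertices sit in a genuine equal partition), so ``agreeing until first entanglement'' needs a graph-coupling that matches the true partition restricted to $[n]\setminus\{$relevant stubs$\}$ with $x$ independent partitions — this is where the $r$-dependence and the divisibility constraints bite, and where one must argue that conditioning on the tagged vertices being in distinct hyperedges costs only $O(x^2/n)$ in total variation on the environment itself. Once that environment-level coupling is in hand, the dynamics (Poisson arrivals and rate-$\kappa$ rewirings) couple trivially step by step, and the queue-length processes stay equal until a tagged pair shares a hyperedge or is jointly sampled. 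A secondary, more routine obstacle is handling the rewiring events that move a non-tagged stub into a tagged vertex's hyperedge: these do not directly couple two tagged queues but do perturb the environment, so one must confirm such events are harmless for the marginal law of $(Q^{(1)},\dots,Q^{(x)})$ — which they are, since a single fresh neighbour's queue is itself distributed as an independent draw from $\pi$ up to the same $O(1/n)$ corrections, and the bookkeeping closes.
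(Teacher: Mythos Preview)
Your outline has the right flavour---a coupling that detects when two of the $x$ tagged queues first interact---but it is missing the one structural ingredient that makes the argument close, and you have correctly identified where: your step three. You propose to run a forward-in-time coupling from stationarity and bound the probability that two tagged vertices ``meet'' over ``the timescale that matters for stationarity''. That timescale is exactly the problem. Any forward coupling from stationarity either requires looking back over the whole infinite past (in which case the meeting probability is $1$) or truncating at some mixing/relaxation time; but the mixing time in this model (Proposition~\ref{prop_mixing}) is itself proved using the dependence construction you do not have, so invoking it here is circular. Even granting a $\Theta((1+\kappa^{-1})\log n)$ window, your per-pair meeting probability over that window picks up an extra $\log n$ factor and does not yield the clean $O(x^2/n)$.

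What the paper does instead is construct, for each vertex $i$, a \emph{backward-in-time information set} $\cH^{(i)}$ (Section~\ref{sec_dependence}): explore the graphical construction back from $(i,0)$, activating neighbours at each arrival and deactivating a vertex when it hits a zero of the dominating M/M/1 queue. Because $\lambda$ is small this exploration is a subcritical branching process, so $|\cH^{(i)}|$ has exponential tails (Proposition~\ref{prop_max_cpt}) and in particular $\e|\cH^{(i)}|^2\le 2$ (Corollary~\ref{cor_h_squared}). The stationary value $Q^{(i)}_0$ is then a deterministic function of the Poisson clocks and zero-hitting times inside $\cH^{(i)}$, so two queues are conditionally independent as soon as their $\cH$-sets use disjoint vertex labels. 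Coupling the finite-$n$ exploration to the $n=\infty$ exploration (which samples directly from $\eta_\kappa^{\otimes x}$, bypassing your intermediate $\pi^{\otimes x}$ step entirely) succeeds whenever no label repeats, and the expected number of clashes is at most $n^{-1}\e\big(\sum_{i\le x}|\cH^{(i)}|\big)^2=O(x^2/n)$; Markov's inequality finishes. This backward construction is the missing idea: it replaces your vague ``timescale that matters'' with a concrete, almost-surely finite dependence set whose size is controlled by the small-$\lambda$ assumption.
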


This follows from considering when the graph union of $2r$ copies of the construction of Section \ref{sec_dependence} finds a tree. Finally, the construction of Section \ref{sec_dependence} tells us that the queue process typically mixes in time $\Theta(\log n)$. The mixing time of an infinite space process does require some definition, however, as from very large initial queues we can only serve at rate $1$ so the \emph{worst case initial position} mixing time would be infinite.

\begin{definition}[Mixing time]
The cartesian product of the queueing system and dynamic hypergraph from some initial graph $g$ and initial queue $q$ has law at time $t$ 
\[
\cL\Big(
\big(Q^{(i)}_t\big)_{i \in [n]},
G_t
\Big|q,g\Big)
\]
and consequently mixing time defined by the total variation distance $\de_{\rm TV}$%
\[
t_{\rm mix}(q,g)=\inf\left\{
t \geq 0 :
\de_{\rm TV}\left[
\cL\Big(
\big(Q^{(i)}_t\big)_{i \in [n]},
G_t
\Big|q,g\Big),
\Pi
\right]\leq \frac{1}{e}
\right\}.
\]
\end{definition}

Moreover, while the mixing time of an unlabelled dynamic configuration model is not well understood \cite{cooper07,erdos22}, within our restricted space of labelled half-edges we have mixing in $\Theta(n\log n)$ moves. This is because our dynamic is essentially several copies of a random transposition Markov chain. Hence we can state the following mixing result.

\begin{proposition}\label{prop_mixing}
In the same contexts as Theorem \ref{thm_queuelength}, from queue lengths $q$ with $\max q =O(\log n)$ and from arbitrary equal partition $g$
\[
t_{\rm mix}(q,g)=\Theta\left(\Big(1+\frac{1}{\kappa}\,\Big)\log n\right).
\]
\end{proposition}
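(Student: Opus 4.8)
The plan is to prove matching upper and lower bounds on $t_{\rm mix}(q,g)$ of order $(1+1/\kappa)\log n$, treating the two summands $\log n$ and $(\log n)/\kappa$ as coming from two distinct mechanisms that must both have had time to act. The quantity $(\log n)/\kappa$ is the timescale on which the underlying random-transposition-type chain on labelled half-edges mixes: a move happens at rate $\kappa(n-2r)=\Theta(\kappa n)$, and as noted in the excerpt this is essentially a product of $O(n)$ random transposition chains, each needing $\Theta(n\log n)$ moves, i.e. time $\Theta\big((\log n)/\kappa\big)$. The additive $\log n$ is the timescale on which the queue process itself forgets its initial configuration: from the construction of Section \ref{sec_dependence} (the coupling / ancestral-line argument already invoked for Proposition \ref{prop_indep}), a tagged queue's law at time $t$ depends on the initial state only through an exploration tree whose depth grows linearly in $t$, and in the small-$\lambda$ regime this tree stays subcritical/small, so after time $C\log n$ the dependence on $q$ (given $\max q = O(\log n)$) has decayed below $1/n$ in each coordinate, hence below $1/e$ in total variation after a union bound over $n$ queues.

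For the \textbf{upper bound} I would couple two copies of the joint process $(Q_t, G_t)$, one started from $(q,g)$ and one from stationarity $\Pi$, in two stages. First, run the hypergraph dynamics alone and couple $G_t$ to its stationary law: since within the labelled-half-edge space the chain is a product of random-transposition chains, a standard strong-stationary-time / coupon-collector argument gives coupling of the graph marginal by time $O\big((1+1/\kappa)\log n\big)$ — the $1$ handles the case $\kappa = O(1)$ where a single "round" of $\Theta(n\log n)$ moves already takes $\Theta(\log n)$ time, and more importantly ensures the bound never drops below $\log n$. Once the graphs agree, run the queues under the same Poisson clocks; here I invoke the Section \ref{sec_dependence} construction to argue that the historical exploration tree determining $Q^{(v)}_t$ under either initial condition has depth $O(t)$ and, for $\lambda$ small, size $n^{o(1)}$ with high probability, so the probability that the two queue processes disagree at $v$ decays like $n^{-\omega(1)}$ once $t \geq C\log n$; a union bound over $v\in[n]$ and over the (polynomially many) relevant time-steps closes it. Combining, $t_{\rm mix}(q,g) = O\big((1+1/\kappa)\log n\big)$.

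For the \textbf{lower bound} I would exhibit, for each of the two mechanisms, a distinguishing statistic. For the $(\log n)/\kappa$ term: pick a specific half-edge (or vertex) and track which hyperedge contains it; under $\Pi$ this is essentially uniform, but started from $g$ it has not moved at all until its Poisson($\kappa$) clock rings, and the number of vertices whose clock has not yet rung by time $t = c(\log n)/\kappa$ is still $\gg 1$ (indeed $\gg n^{1-c}$) with high probability, giving total variation bounded away from $0$ — this is the classical lower bound for the random transposition / interchange chain transported to our setting. For the additive $\log n$ term: start from a queue configuration $q$ with a single vertex at height $\Theta(\log n)$ (permissible since we only assume $\max q = O(\log n)$); because servers work at rate $1$, that queue cannot empty before time $\Theta(\log n)$, so $\max_v Q^{(v)}_t$ stays $\Omega(\log n)$ whereas under $\Pi$, by Theorem \ref{thm_queuelength}, $\max_v Q^{(v)}$ is only $O(\log n)$ with the right (smaller) constant when $\kappa$ is not too large — more robustly, one can use that a fixed tagged queue under $\Pi$ is $O(1)$ in probability (tightness of $\pi_\kappa^{(n)}$, via the $L^1$ convergence to $\eta_\kappa$ noted in the excerpt) while ours is $\Theta(\log n)$, a total-variation gap of $1-o(1)$. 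Taking the maximum of the two lower bounds matches the upper bound up to constants.

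The \textbf{main obstacle} is the upper-bound coupling of the queue coordinates after the graphs have been synchronised: one must show that the influence of the initial queue vector $q$ on $Q^{(v)}_t$ genuinely dies out in time $O(\log n)$ \emph{uniformly} over all $v$ and over a polynomial time window, despite the hypergraph being dynamic (so the exploration tree branches both through arrivals and through edge-swaps). This requires that the Section \ref{sec_dependence} exploration stays of size $n^{o(1)}$ with probability $1 - n^{-\omega(1)}$ — a sub-ballistic growth estimate that leans essentially on $\lambda$ being small (keeping the arrival-driven branching subcritical) and on $\kappa$ not being so large that swap-driven branching explodes; reconciling the large-$\kappa$ regime (where $\kappa$ can be $n^{\Omega(1/\log\log n)}$) with this bound is the delicate point, and is presumably exactly what the $\log n$ (rather than a smaller) additive term and the small-$\lambda$ hypothesis are there to buy.
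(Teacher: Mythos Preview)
Your overall two-mechanism decomposition---graph mixing in time $\Theta((\log n)/\kappa)$ via random transpositions, queue forgetting in time $\Theta(\log n)$ via the backward exploration of Section~\ref{sec_dependence}---matches the paper's proof exactly, and your lower bounds (an unmoved half-edge; an initial queue of height $\Theta(\log n)$ that cannot be served down) are correct. The paper's lower bound for the $\log n$ term is phrased slightly differently (it finds a queue with \emph{no} arrivals or servings at all by time $c\log n$, hence still equal to its initial value), but yours works just as well.

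The one genuine correction concerns what you flag as the ``main obstacle''. You write that the exploration tree ``branches both through arrivals and through edge-swaps'' and worry that large $\kappa$ could make the swap-driven branching explode. This is a misreading of the construction: in Section~\ref{sec_dependence} a vertex becomes active only when a task \emph{arrives} in the neighbourhood of an already active vertex; edge swaps merely change which vertices constitute that neighbourhood at the moment of arrival, they do not themselves spawn new active vertices. Consequently the offspring distribution in Proposition~\ref{prop_max_cpt} is governed by the arrival rate $m\lambda$ alone and the resulting exponential tail on $|\cH^{(i)}|$ is \emph{uniform in $\kappa$}. That is precisely why, after the graph has mixed, a further $C\log n$ of time suffices for every $\cH^{(v)}$ (union-bounded over $v\in[n]$) to stay inside the post-mixing window, with no delicate reconciliation needed in the large-$\kappa$ regime. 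Once you drop the phantom swap-branching, your proposed upper bound goes through cleanly and coincides with the paper's argument.
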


This too extends the $\Theta(\log n)$ which we see in \cite[Theorem 1.1]{mcdiarmid2006}, for what we have heuristically identified as the $\kappa=\infty$, $r=1$ case of this model.

\subsection{General degree}

Sticking together multiple copies of the model of Definition \ref{def_dynamic_partition} we can construct $d$-regular $2r$-uniform dynamic hypergraphs. In this more general graph, the supermarket model then continues to distribute tasks among the neighbourhood of the vertex to which they arrive.

\begin{definition}[Random $d$-regular $2r$-partite hypergraph]\label{def_hypergraph}
For $r, d\in \mathbb{N}$ take vertex set $[n]=\{1,\dots,n\}$ with $n$ divisible by $2r$. Each hyperedge is a subset of $2r$ vertices (the hypergraph is $2r$--uniform) such that every vertex is in $d$ hyperedges (the hypergraph is $d$--regular). Make this construction uniformly at random and we have a random simple regular uniform configuration hypergraph.
\end{definition}

Equivalently, this model could be defined as the configuration model hypergraph conditioned on being simple (with the generalisation of simplicity to hypergraphs in the sense of \cite{frieze13}). However, without this conditioning there is a subtle difference in the frequency of repeated pairs of vertices. Note that this is perhaps the less common generalisation of simplicity, for example \cite{voloshin09,bretto13} use the term to disallow inclusion of edges.

The dynamic with the above stationary distribution is as follows.

\begin{definition}[Dynamic $d$-regular $2r$-partite hypergraph]\label{def_dynamic_hypergraph}
Label hyperedges incident to each vertex with $[d]$. Then, at each vertex, each ``half-edge'' of label $i \in [d]$ at rate $\kappa$ picks a uniform vertex in $[n]$ and swaps with the half-edge of label $i$ incident to that vertex.
\end{definition}

Note this construction is possible (i.e. we can label the hyperedges in this way) for graphs in the range of Definition \ref{def_hypergraph} as they are a union of $d$ copies of the graph of Definition \ref{def_partition}. Note also that the half-edge moves of its own volition as often as by being picked, so the total rate for each edge is $2\kappa(1-\nicefrac{1}{n})$.

\begin{remark}
Constructively, each label corresponds to one of $d$ $1$-regular configuration models and the dynamic describes independent swapping of these $d$ bijections. Combining sets of $r$ edges in each configuration model then turns the graph into a $2r$-regular hypergraph which never multiply features the same vertex in a hyperedge.
\end{remark}

\begin{proposition}\label{prop_generalisation}
Take now general $d\geq 1$:
    \begin{enumerate}[label=(\alph*), ref=\ref{prop_generalisation}(\alph*)]
        \item Theorem \ref{thm_queuelength} holds for the dynamic $d$-regular $2r$-partite hypergraph; \label{prop_general_queuelength}
        \item Proposition \ref{prop_indep} holds for the dynamic $d$-regular $2r$-partite hypergraph; \label{prop_general_indep}
        \item Proposition \ref{prop_mixing} holds for the dynamic $d$-regular $2r$-partite hypergraph. \label{prop_general_mixing}
    \end{enumerate}
\end{proposition}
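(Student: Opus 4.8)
The plan is to reduce each of (a)--(c) to its already-established $d=1$ case. The backbone is the observation made after Definition \ref{def_dynamic_hypergraph}: the dynamic $d$-regular $2r$-partite hypergraph is the superposition of $d$ mutually independent copies of the dynamic equal partition of Definition \ref{def_dynamic_partition}, one per half-edge label, each swapping its own bijection at rate $\kappa$. Hence a task arriving at a vertex $v$ chooses the shortest of at most $d(2r-1)$ queues, and any local exploration around $v$ branches by at most $d(2r-1)$ at each step. As $d$ and $r$ are fixed, the strategy throughout is to re-run the $d=1$ arguments with the branching constant $2r-1$ replaced by $d(2r-1)$ and with ``one swapped bijection'' replaced by ``$d$ independent swapped bijections''; at the level of the $\Theta$- and $O$-orders this changes nothing, and I indicate where $d$ enters below.

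\textbf{(a), upper bound.} The proof of Theorem \ref{thm_queuelength} bounds $\p\big(\exists\, t\le n^C,\ v\in[n]:\ Q^{(v)}_t\ge k\big)$ by building backwards in time a bounded-depth tree of ``high-queue ancestor'' events, each of which reveals at most $2r-1$ fresh neighbours and contributes a factor governed by $\lambda$ and --- through the probability that the relevant edge has not yet moved --- by $\kappa$. Enlarging the per-step branching to $d(2r-1)$ multiplies the tree size by a fixed power and leaves the $\kappa$-dependence intact, so the same choices of threshold $k$ (of orders $\log\log n$, $\log n/\log\kappa$ and $\log n$ in the three regimes) still render the bound $o(1)$. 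One could instead couple so that the $d$-regular system always has at least as few queues of length $\ge k$ as the $d=1$ system --- more choices only improving the balance --- but re-running the witness bound avoids having to establish such a monotonicity on a fixed graph.

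\textbf{(a), lower bound.} This is where the extra labels genuinely interfere, and it is the main obstacle. For $d=1$ one exhibits, inside $[0,n^C]$, polynomially many essentially independent (region, time-window) pairs --- the window of length $w$ with $w\asymp 1$ (resp.\ $w\asymp 1/\kappa$) according to the regime --- on each of which a bounded local neighbourhood stays frozen and, with probability bounded below by a constant $p_0=p_0(\lambda,r)>0$, the queues in it cascade to the target height; a second-moment / Borel--Cantelli step then yields the height with high probability. For general $d$ the neighbourhood that must be frozen is the $d$-regular local ball --- still of size $O_{d,r}(1)$ but now including the hyperedges of every label incident to it --- since $v$'s queue grows only while all of its $\le d(2r-1)$ neighbours are at least as long. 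One then checks that (i) this larger bounded structure is still frozen on the window with constant probability (a fixed number of edges each persisting for time $\ge w$), (ii) the $\mathrm{Poisson}(O_{d,r}(\lambda w))$-many arrivals into the ball during the window do not escape through a short queue reached via another label --- an event of probability $O_{d,r}(\lambda w)$ by the small-$\lambda$ assumption, so the conditional cascade probability remains bounded below --- and (iii) the enlarged regions can still be taken disjoint with the windows weakly dependent. Granting these, the unchanged counting returns the lower orders $\log\log n$, $\log n/\log\kappa$, $\log n$; in the large-$\kappa$ regime the $\log\log n$ lower bound alternatively comes from the power-of-$d(2r-1)$-choices estimate, again only changing a constant.

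\textbf{(b) and (c).} For Proposition \ref{prop_indep} the exploration of Section \ref{sec_dependence} is now run on the union of the $d\cdot 2r$ half-edge instances touching the $x$ tagged vertices; as before it reveals a forest unless two of the $O_{d,r}(x)$ exposed stubs collide --- probability $O_{d,r}(x^2/n)$ --- and on that event $\cL(Q^{(1)},\dots,Q^{(x)})$ factorises as $\eta_\kappa^{\otimes x}$, so the order $O(x^2/n)$ is preserved. For Proposition \ref{prop_mixing} the graph component is a product of $d$ independent chains of random-transposition type on labelled half-edges, whose product mixes in $\Theta\big((1+1/\kappa)\log n\big)$ because $d$ is a fixed number of factors; conditioning on the graph having refreshed, the queue component couples down from $\max q=O(\log n)$ within a further $O(\log n)$ steps exactly as for $d=1$, and the matching lower bound on $t_{\mathrm{mix}}$ descends from the label-$1$ sub-chain. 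Thus the only ingredient not already contained in the $d=1$ proofs is the small-$\lambda$ non-escape estimate used in the lower bound of part (a).
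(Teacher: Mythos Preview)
Your proposal frames the proof as a reduction from general $d$ to the already-proved $d=1$ case. The paper does the opposite: it simply writes every argument from the outset with the parameter $m:=1+d(2r-1)$, so that Proposition~\ref{prop_generalisation} is not proved separately at all---the one-line ``proof'' is that the subsequent sections are already written in the required generality. For parts (b) and (c) your sketch matches what the paper actually does (the exploration of $\cH$ and the product of $rd$ random-transposition chains are carried out directly with $m$ in place of $2r$).

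For the upper bound in (a) you differ substantively. You propose to re-run the witness tree with branching $d(2r-1)$, and explicitly dismiss the coupling route as requiring ``monotonicity on a fixed graph''. In fact the paper does exactly that coupling (Proposition~\ref{prop_domination}): cut each $2r$-hyperedge of label $1$ into $r$ ordinary edges, observe that the resulting dynamic perfect matching is a subgraph of the full dynamic hypergraph throughout time, and conclude that every arrival joins a no-longer queue in the larger system. This stochastic domination reduces the whole of Section~\ref{sec_upper} to the case $r=d=1$ in one stroke, which is considerably cleaner than re-proving the $\eta_\kappa$ tail bounds and concentration lemmas with an extra $d$.

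For the lower bound in (a) your freeze-and-cascade sketch has a gap in the intermediate regime $\kappa\in(\omega(1),n^{o(1/\log\log n)})$. Whatever window $w$ you pick fails: with $w\asymp 1/\kappa$ the neighbourhood stays frozen but sees only $\mathrm{Poisson}(O(\lambda/\kappa))=o(1)$ arrivals, nowhere near the $\Theta(\log n/\log\kappa)$ needed; with $w\asymp 1$ the $O_{d,r}(1)$ incident half-edges refresh at total rate $\Theta(\kappa)\to\infty$ and the neighbourhood does not stay frozen with bounded-below probability. The paper instead introduces a \emph{zero-on-update} process (a single $m$-vertex hyperstar whose leaves are reset to $0$ whenever an edge moves), computes an explicit lower bound on its stationary tail $\rho(x)\gtrsim(\lambda/\kappa)^{\Theta(x)}$ (Proposition~\ref{prop_zeroqueue}), and then uses mixing to resample from $\rho$ polynomially many times (Corollary~\ref{cor_lower_bound}). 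This stationary-mass argument is what delivers the $\log n/\log\kappa$ order and is not captured by your construction.
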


These statements include the previous results as their $d=1$ case, so we will only prove them in this form.

\section{The dependence structure}\label{sec_dependence}

The assumption of small $\lambda$ is very powerful for controlling dependence in the system, a construction for which we will discuss here. 
Additionally, note that for this model we have $1+d(2r-1)$ vertices in the neighbourhood: we will alleviate notation in this section and the next by writing \[m:=1+d(2r-1).\]

Fortunately, in the later upper bounds of Section \ref{sec_upper} we have a stochastic inclusion argument to be able to set $d=r=1$ and so forget $m$. 

\begin{lemma}
When $m\lambda<1$, and two random variables on $\mathbb{N}^n$ are ordered $q\preceq\tilde{q}$, we have domination of the supermarket model $(Q^{(v)}_t)_{v\in[n],t\geq 0}$ from $Q_0=q$ by a system $(\tilde{Q}^{(v)}_t)_{v\in[n],t\geq 0}$ from $\tilde{Q}_0=\tilde{q}$. The dynamics of $\tilde{Q}$ are that of $n$ independent biased random walks, where each decreases at rate $1$ and increases at rate $m\lambda$.
\end{lemma}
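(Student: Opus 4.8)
The plan is to build an explicit monotone (basic) coupling of the supermarket model $Q$ with the system $\tilde Q$ of independent biased walks, both driven by a common family of Poisson clocks, and then to check by induction over the (a.s.\ locally finite) set of jump times that the coordinatewise order $Q^{(v)}_t\le\tilde Q^{(v)}_t$ is never violated.

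First I would record the two rate estimates that make the domination plausible. In the supermarket model, $Q^{(v)}$ decreases by one at rate exactly $\1[Q^{(v)}>0]$, and it increases by one only when a task arrives at a vertex $u$ whose current closed neighbourhood $N[u]$ contains $v$ and for which $v$ is among the shortest queues of $N[u]$; since adjacency is symmetric and $N[v]$ has at most $m$ elements, this increase rate is at most $\lambda|N[v]|\le m\lambda$, uniformly over the state of the queues and of the hypergraph. So at each coordinate the supermarket model pushes up no faster, and pulls down at least as fast, as the biased walk $\tilde Q^{(v)}$ with birth rate $m\lambda$ and death rate $\1[\cdot>0]$, which is exactly the monotonicity needed.

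For the coupling I would sample $\tilde Q$ first, so its components are independent by construction: attach to each $v$ a rate-$m\lambda$ Poisson ``birth'' clock and a rate-$1$ Poisson ``death'' clock (all $2n$ clocks mutually independent and independent of the hypergraph dynamics), and let $\tilde Q^{(v)}$ jump up at its birth clock and down, if positive, at its death clock. Now drive $Q$ off the same clocks: $Q^{(v)}$ jumps down, if positive, at $v$'s death clock (so $Q$ and $\tilde Q$ share departures), and at each ring of $v$'s birth clock I draw an independent uniform label among $m$ ``slots'', with $|N[v]|$ of them identified with the vertices of $N[v]$ and the rest dummies. A dummy does nothing; if a vertex $u\in N[v]$ is drawn and $v$ lies in the current shortest-queue set $A(u)\subseteq N[u]$, then with a further independent probability $1/|A(u)|$ I declare a task arrival at $u$ routed to $v$, incrementing $Q^{(v)}$. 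The one-line identity $\sum_{v\in N[u]}\1[v\in A(u)]/|A(u)|=1$ shows each vertex $u$ then receives arrivals at rate exactly $\lambda$, each routed to a uniform shortest queue of $N[u]$; conditioning on the hypergraph trajectory (under which the birth clocks are still independent Poisson), each arrival stream is Poisson$(\lambda)$, distinct vertices never fire simultaneously so the streams are jointly independent, and they are independent of the hypergraph. Hence $Q$ has exactly the law of the supermarket model, and $\tilde Q$ is $n$ independent biased walks.

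Finally I would verify order preservation event by event from $q\preceq\tilde q$: a hypergraph move changes no queue; a death-clock ring drops $Q^{(v)}$ and $\tilde Q^{(v)}$ each by one if positive, which cannot reverse $Q^{(v)}\le\tilde Q^{(v)}$ (if equal and positive both drop; if $Q^{(v)}<\tilde Q^{(v)}$ then $\tilde Q^{(v)}$ drops to at least $Q^{(v)}$); and a birth-clock ring always increments $\tilde Q^{(v)}$ and increments $Q^{(v)}$ only sometimes, so the gap at $v$ can only grow. The main obstacle is exactly this joint consistency: one needs simultaneously that $Q$ carries the genuine supermarket law — Poisson$(\lambda)$ arrivals with fresh uniform tie-breaks, independent of the moving hypergraph — that $\tilde Q$ is precisely $n$ independent biased walks, and that the order holds pathwise; the tension is that $\tilde Q$'s birth rate must ``cover'' the routing of $Q$, which depends on $Q$'s (shared) departures, so one must either build $\tilde Q$'s clocks first as above, or appeal to the martingale (Watanabe) characterisation of Poisson processes to recover the required independence after the fact. (The hypothesis $m\lambda<1$ is not needed for the coupling itself, only to make the dominating walks positive recurrent, which is what is used downstream.)
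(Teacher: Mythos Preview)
Your proof is correct and follows essentially the same approach as the paper: build the independent M/M/1 system $\tilde Q$ first from per-vertex rate-$m\lambda$ arrival and rate-$1$ service clocks, then recover $Q$ by thinning each arrival at $v$ via a uniform label in $[m]$ (selecting a neighbour $u$) and accepting only when $v$ is among the shortest in $N[u]$, with the shared service clocks ensuring the order is preserved. Your write-up is in fact more careful than the paper's terse version---you verify the Poisson/independence properties of the reconstructed arrival streams and check order preservation event by event---and your closing remark that $m\lambda<1$ is used only for positive recurrence downstream, not for the coupling, is correct.
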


\begin{proof}
$\tilde{Q}$ is simply a collection of M/M/1 queues, which we construct by putting at each vertex in $[n]$ a Poisson process of rate $m\lambda$ of arriving tasks and a Poisson process of rate $1$ which contains the serving times. Then $Q$ can be constructed from $\tilde{Q}$:
\begin{itemize}
\item label each task in $\tilde{Q}$ arriving to some $v\in[n]$ with a uniform variable in $[m]$, corresponding to a position in the neighbourhood of $v$;
\item this task is accepted, i.e. it is allowed to increase $Q^{(v)}$, if $v$ is the unique shortest queue length in the neighbourhood of that $m$\textsuperscript{th} vertex;
\begin{itemize}
\item[$\hookrightarrow$] if there are $k$ jointly shortest queues including $v$, it is instead accepted independently with probability $\nicefrac{1}{k}$.
\end{itemize}
\end{itemize}

With a strict subset of the arrival times and the exact same serving times, the domination is preserved.
\end{proof}

Then, think of the system which is stationary at time $0$ as extended in both directions to a stationary process on times in $\mathbb{R}$. We can do the same for a stationary version of $\tilde{Q}$ while preserving the domination. 
$\tilde{Q}$ has sets of past zero hitting times
\[
\cO^{(i)}:=\left\{
t\leq 0:
\tilde{Q}_t^{(i)}=0, \,
\tilde{Q}_{t-}^{(i)}=1
\right\}
\]
where the sets $\{\cO^{(i)}:i \in [n]\}=:\cO$ are also independent random variables. Explore from some vertex $j \in [n]$ backwards in time in the graphical construction:
\begin{itemize}
\item initially vertex $j$ is \emph{active};
\item if a queuer arrives in the neighbourhood of an active vertex then all vertices in the neighbourhood of arrival become active;
\item If vertex $i$ is active at a time in $\cO^{(i)}$ it  becomes \emph{dormant} at that time (it may become active again).
\end{itemize}

We write \emph{neighbourhood} above to mean the set of a vertex and its neighbours. 
When there are no active vertices remaining, the set of dormant vertices is denoted $\cH^{(j)}$---see a sample path in Figure \ref{fig_dependence}. This is similar to the information sets of \cite{keliger2024concentration}, improved for our specific model with the independence given by the sets $\cO^{(i)}$.

\begin{figure}
\centering
\includegraphics[width=0.5\textwidth]{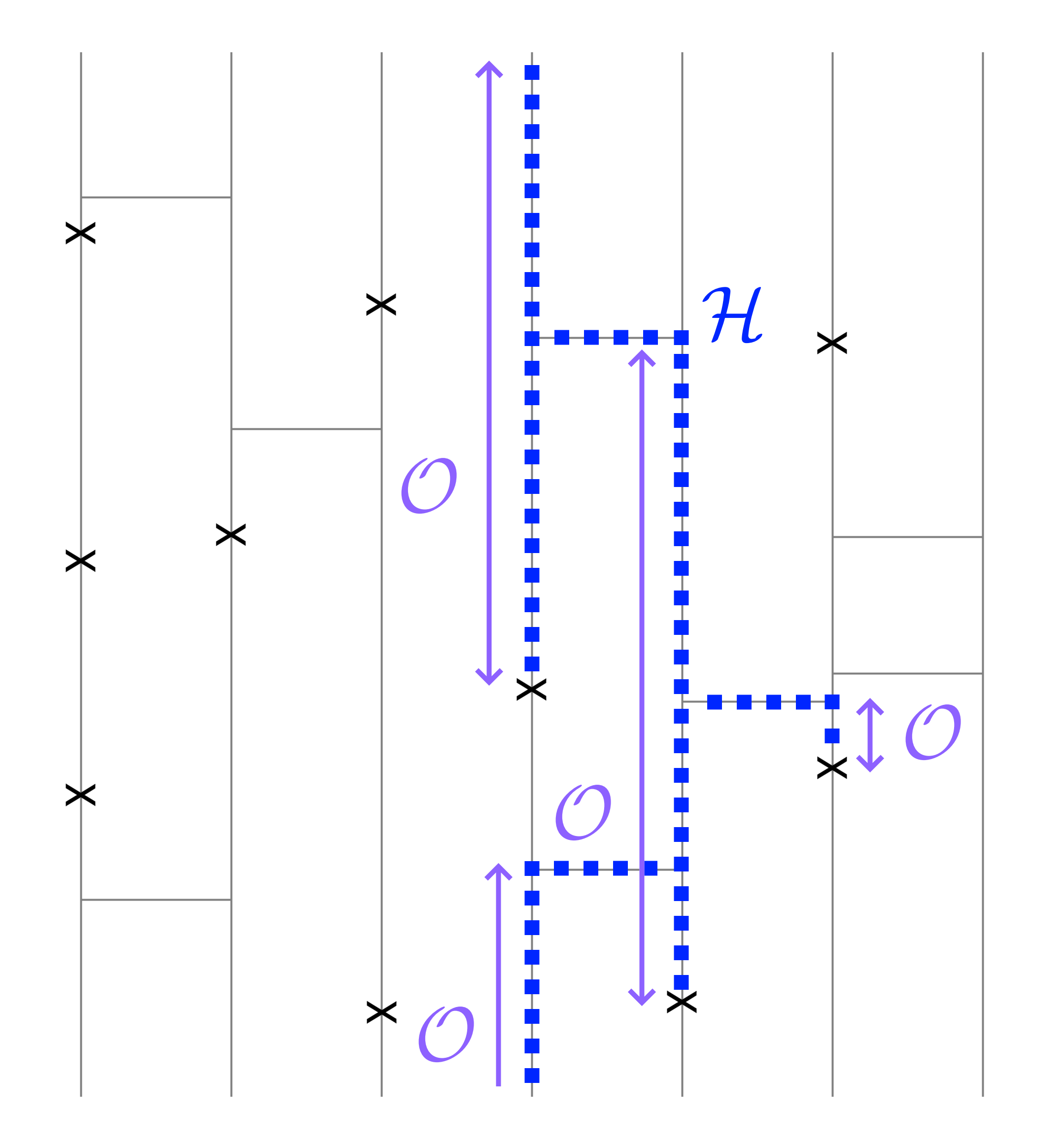}
\caption{The construction of Section \ref{sec_dependence}, with horizontal lines marking shared arrivals and crosses marking times in $\cO$. Then on top, the dotted line is the exploration of $\cH$ finding $3$ vertices, and the arrows measure the lengths of $-t-\cO^{(i)}_{-t}$. The sampled offspring are then $1$, $2$, $0$ and $0$.}\label{fig_dependence}
\end{figure}

Thus we see every hitting in $\cO$ that had an effect on the queue at time $0$. 
The length of the queue of interest is completely determined by the times $\cO$ at the boundary of the time-space component we explored and the Poisson arrivals inside it of the graphical construction.  
So, two queues with totally distinct labels in their backwards explorations are functions of independent random variables, hence on this event they are conditionally independent (conditioning on the full graph history).

%The following expression is also seen in \cite{garden}*{Section 10}.
%
%\begin{lemma}\label{lem_gw_moment}
%A Galton-Watson process $(Z_n)_{n \geq 0}$ with ${Z_0=1}$, $\e(Z_1)=\mu<1$ and $\var(Z_1)=\sigma^2$ has
%\[
%\var
%\left(
%\sum_{n=1}^\infty Z_n
%\right)
%=\frac{\sigma^2}{(1-\mu)^3}.
%\] 
%\end{lemma}
%
%\begin{proof}
%From the calculation of \cite{kortchemski}*{Proof of Lemma 2.10}
%\[
%\begin{split}
%\e\left(\left(
%\sum_{n=1}^\infty Z_n
%\right)^2\right)&=
%\left(
%1+\sum_{n=1}^\infty
%\left(
%\frac{\sigma^2 \mu^{n-1}(1-\mu^n)}{1-\mu}+\mu^{2n}
%\right)
%\right)
%\left(
%1+\frac{2\mu}{1-\mu}
%\right)\\
%&=
%\left(
%1+
%\frac{\sigma^2}{(1-\mu)^2(1+\mu)}
%+\frac{\mu^2}{1-\mu^2}
%\right)
%\left(
%1+\frac{2\mu}{1-\mu}
%\right)\\
%&=\frac{1}{(1-\mu)^2}
%+\frac{\sigma^2}{(1-\mu)^3}
%\end{split}
%\]
%where we identify the first term as the mean size of the tree, squared.
%\end{proof}

This next claim is where this construction requires $\lambda$ sufficiently small. The exponential tails of the M/M/1 queues lead to exponential tails of the dependence component size. This will be used to control mixing, and to control independence in space.

\begin{proposition}\label{prop_max_cpt}
For any $i \in [n]$ and $k \geq 2$
\[
\p\left(|\cH^{(i)}| \geq k \right) \leq  (4m+1)(18m^2\lambda)^{k-1}
\]
in the stationary realisation and for $\lambda$ small enough.
\end{proposition}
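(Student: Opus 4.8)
The plan is to realise $\cH^{(i)}$ as (essentially) the total progeny of a subcritical branching process and then read off the exponential tail from a standard total-progeny estimate. Run the backward exploration of Section \ref{sec_dependence} as a breadth-first search. The key structural observation is that an active vertex $v$ is active only over the prefix, back from its activation time, of the busy period of $\tilde Q^{(v)}$ containing that time: once $\tilde Q^{(v)}$ reaches $0$ so does $Q^{(v)}$, and nothing earlier can matter. Over that interval the $m$ vertices of $v$'s neighbourhood receive tasks at total rate $m^2\lambda$ in the graphical construction, and each such task activates the at most $m$ vertices of one further neighbourhood. I would first record the two inputs behind the subcriticality, both requiring $\lambda$ small: (a) the busy period of the $M/M/1$ queue $\tilde Q^{(v)}$ (rate $m\lambda$ in, rate $1$ out) is distributed as the total progeny of a subcritical Galton--Watson process, so the number of tasks in a busy period has a geometric tail with parameter $\Theta(m\lambda)$, and hence --- after a Poisson layer for $v$'s $m-1$ neighbours over that period --- the number of tasks landing in $v$'s neighbourhood during its active period has a geometric-type tail with parameter $\Theta(m^2\lambda)$; and (b) since $\tilde Q^{(v)}$ regenerates at the start of its busy period, these estimates hold at every activation time --- not just at the typical time $0$ --- once one conditions on the hypergraph history, which is where the independence built into the sets $\cO^{(i)}$ is used.

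Next I would set up the domination. Split the vertices of $\cH^{(i)}$ into \emph{live} ones, whose dominating queue is nonzero when they are activated, and \emph{still} ones, activated with an empty dominating queue and hence instantly dormant (so childless, barring reactivation). A single activating task at a vertex $w$ contributes $w$ --- always live, having just received that task --- together with its $m-1$ neighbours, each independently live only with stationary probability $\Theta(m\lambda)$; hence one task produces $1+\mathrm{Bin}(m-1,\,\Theta(m\lambda))$ live vertices, so there is no branching blow-up among live vertices. Consequently the number of live vertices is dominated by the total progeny $Z$ of a Galton--Watson process with offspring law $\sum_{a=1}^{N}\bigl(1+\mathrm{Bin}_a(m-1,\,\Theta(m\lambda))\bigr)$, where $N$ --- the number of tasks in a vertex's neighbourhood over its active period --- is dominated by the geometric-tailed variable from (a); this offspring law has mean $\Theta(m^2\lambda)$ and, for $\lambda$ small, a comparable geometric tail. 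The still vertices number at most $m-1$ times the total number of activating tasks, which given $Z$ live vertices is a sum of $Z$ independent geometric-tailed variables and so lies below $\Theta(m^3\lambda)\,Z$ except on an exponentially rare event.

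Finally I would convert this to the stated bound. The Otter--Dwass identity $\p(Z=k)=\tfrac1k\,\p(X_1+\dots+X_k=k-1)$ together with a Chernoff bound on $\sum_i X_i$ --- using the geometric tail of $N$ to control the generating function uniformly over the random, correlated active-period lengths --- gives $\p(Z\geq \ell)\leq C\,(C'm^2\lambda)^{\ell}$. Combining with the control on the still vertices, $|\cH^{(i)}|\geq k$ forces $Z\geq(1-o(1))(k-1)$ up to an event absorbed into these geometric tails, and carrying the crude constants through the $1+\mathrm{Bin}(m-1,\cdot)$ step, the $m$ neighbourhood vertices clocked at rate $m\lambda$, and the busy-period tail yields $(4m+1)(18m^2\lambda)^{k-1}$. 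I expect the main obstacle to be the coupling between a vertex's active-period length and the tasks arriving within it: these are produced by the same Poisson clocks that drive $\tilde Q^{(v)}$, so one cannot first condition on the active period and then treat the tasks as fresh. The resolution is to dominate ``tasks in the active period'' by ``tasks in the whole busy period'', which via the $M/M/1$ busy-period / subcritical Galton--Watson correspondence is an \emph{independent} quantity with the clean geometric tail the argument needs; a secondary difficulty, preventing the up-to-$m$-fold batching of activations from inflating the branching rate, is exactly what the live/still split and the $1+\mathrm{Bin}(m-1,\cdot)$ bookkeeping take care of.
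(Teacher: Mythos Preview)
Your plan is sound and lands on the same skeleton as the paper: dominate $|\cH^{(i)}|$ by the total progeny of a subcritical Galton--Watson tree whose offspring law comes from the $M/M/1$ busy period of the dominating walk $\tilde Q$, and then read off an exponential tail. The paper also resolves your ``main obstacle'' exactly as you suggest, dominating the active prefix by the full busy period; in fact it goes one step further and extends to the \emph{two-sided} busy period (from the last zero before discovery to the next zero after), which makes the length independent of where in the period the discovery fell and lets one average cleanly over the $\tilde g$-shifted stationary law of the height at discovery.

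Where the paper differs from your proposal is in the bookkeeping. It does not use a live/still decomposition or the $1+\mathrm{Bin}(m-1,\cdot)$ device at all. Instead the offspring of an explored vertex is simply $D$, the number of arrivals to its dominating $M/M/1$ queue in that two-sided busy period; the exponential moment $\e[(4m\lambda)^{-D}]$ is computed directly by solving the quadratic recursion $\e_1(s^{T_0})=\tfrac{s}{1+m\lambda}+\tfrac{m\lambda s}{1+m\lambda}\e_1(s^{T_0})^2$ for the first-passage generating function, squaring for the two directions, and then integrating against $\tilde g$. The tail of the tree size is then obtained by Markov's inequality on the \L ukasiewicz path (equivalent to your Otter--Dwass/Chernoff route). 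This explicit route is what produces the stated constants $(4m+1)$ and $18m^2\lambda$, which would be hard to reach from the qualitative $\Theta(m^2\lambda)$ tails in your sketch.

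One small caution on your sketch: the claim that the $m-1$ neighbours of a freshly activated vertex are each ``independently live with stationary probability $\Theta(m\lambda)$'' is not literally true at an arbitrary activation time; the paper handles this by observing that any rediscovered vertex was last seen at a zero of $\tilde Q$ and so can be dominated by a fresh stationary copy shifted up by one (the arriving task), which is exactly the shifted law $\tilde g$ used above. With that fix your argument goes through, but the live/still layer is extra machinery that the paper's two-sided busy period plus explicit generating function avoids entirely.
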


%\begin{lemma}\label{lem_h_squared}
%For any $i \in [n]$ and at stationarity,
%$\e(|\cH^{(i)}|^2)\leq 2$ when $\lambda$ is sufficiently small (depending on $r$ and $d$).
%\end{lemma}

\begin{proof}
%For most of this proof, we take $r=d=1$. 
From some queue length $\tilde{Q}_{-t}^{(i)}=x$, the previous hitting time \[\cO^{(i)}_{-t}:=\max\cO^{(i)}\cap(-\infty,-t)\] is given exactly by a continuous-time biased random walk $Y$. In discrete time, consider how many steps the jump chain must take to hit $0$. From $1$ we consider a $1$-step recursion
\[
\e_1(s^{T_0})
=
\frac{s}{1+m\lambda}
+
\frac{m\lambda s}{1+m\lambda}
\e_1(s^{T_0})^2.
\]

This moment can only be finite for $s$ small enough, i.e. when the quadratic has a non-negative discriminant
\[
s\leq\frac{1+m\lambda}{2\sqrt{m\lambda}}
\]
and so we can take $s=\nicefrac{1}{(2\sqrt{m\lambda})}$. Then solve the quadratic to find:
\[
\e_1(s^{T_0})
=
\frac{1}{\sqrt{\lambda m}}
+
\sqrt{\lambda m}
+
\sqrt{2+\lambda m}
\leq
\frac{1}{\sqrt{\lambda}}
\]
because $\lambda$ is sufficiently small and $m\geq 2$. 
Hence from general $x \geq 1$, the hitting time is a sum of $x$ independent samples of the distribution above i.e.
$
\e_x(s^{T_0})
\leq
\left(
\nicefrac{1}{\sqrt{\lambda}}
\right)^{x}.
$ 

To explore $\cH$ we must explore backwards once until every discovered vertex hits a time in $\cO^{(i)}$, and then forwards from that time at every discovered vertex at most up to $t=0$. To simplify this, we can upper bound the structure by exploring forwards and backwards at every discovered vertex.

So, we are interested in how many arrivals a particular vertex $i$ had in between the previous time in $\cO^{(i)}$ and the next time in $\cO^{(i)}$. The biased random walk is the same backwards in time, and so the length of that interval (given $x$) has the generating function of two independent copies: $(\nicefrac{1}{\lambda})^x$. For the arrivals $D_x$ this means
\[
\e_x\left((2\sqrt{m\lambda})^{-2x-2D_x}
\right)
=
\e_x\left((4m\lambda)^{-x-D_x}
\right)
\leq
\left(
\frac{1}{\lambda}
\right)^x
\]
because the initial queue length $x$ must be served in both directions, and any arrivals take unit time and then additional unit time to be served.

In a stationary history, the first discovery of vertex $i$ has at most queue length drawn from $\tilde{Q}^{(i)}$ at stationarity, and subsequent rediscoveries were forgotten at a hitting of $0$ so their new state can also be dominated by stationarity. However the act of discovering them may add $1$ to the queue length, so we start at $1$ larger than stationarity
\[
\tilde{g}(x)=(1-m\lambda)(m\lambda)^{x-1}\1_{k\geq 1}
\]

Therefore an explored stationary queue has offspring distribution $D$ with
\[
\begin{split}
\e(4m\lambda)^{-D}
&\leq
\e_{\tilde{g}}
(4m\lambda)^{-D_x}
=\sum_{x=1}^\infty
(1-m\lambda)(m\lambda)^{x-1}
\e_x(4m\lambda)^{D_x}\\
&=
\left(\frac{1}{m\lambda}-1\right)
\sum_{x=1}^\infty
(4m^2\lambda^2)^{x}
\e_x(4m\lambda)^{-x-D_x}\\
&\leq
\left(\frac{1}{m\lambda}-1\right)
\sum_{x=1}^\infty
(4m^2\lambda)^{x}
\leq 4m+1.
\\
\end{split}
\]

We explore this Galton--Watson tree by its \L ukasiewicz path: starting at $1$ representing the root, iteratively explore a vertex and increment the path by the number of children you find there (new vertices to be explored) minus $1$ for the vertex just explored. 
The component is completely explored when the \L ukasiewicz path hits $0$, and at $k$ explorations it is positive with probability at most
\[
\begin{split}
\p\left(\sum_{i=1}^k X_i \geq k-1 \right)
=\p\left((4m\lambda)^{-\sum_{i=1}^k X_i} \geq (4m\lambda)^{1-k} \right)
&\leq(4m\lambda)^{k-1}(4m+1)^k\\
\end{split}
\]
by Markov's inequality, which is the claimed bound because $4m(4m+1)\leq 18m^2$.
\end{proof}

%\begin{proof}[excess proof]
%and so by Markov's inequality
%\begin{multline*}
%\p_{\tilde{g}}(k\text{ or more up-steps in the interval of }\cO^{(i)})
%\leq
%\e_{\tilde{g}}\left(
%s^{-x-2k}\left(\frac{4}{\lambda}\right)^x
%\right)\\
%=\sum_{x=1}^\infty
%(1-2\lambda)(2\lambda)^{x-1}
%s^{-x-2k}
%\left(
%\frac{4}{\lambda}
%\right)^{x}\\
%=
%\left(\frac{1}{2\lambda}-1\right) (3\sqrt{\lambda})^{2k}
%\sum_{x=1}^\infty
%\left(
%2\lambda \cdot 3\sqrt{\lambda} \cdot \frac{4}{\lambda}
%\right)^x
%=(9\lambda)^{k}
%\left(
%\frac{12}{\sqrt{\lambda}}+O(1)
%\right)
%.
%\end{multline*}
%
%\hrulefill
%
%What that means is that we accept every arrival in the interval as in the previous display: the exploration of $\cH$ is contained in a Galton--Watson tree with that offspring distribution.
%
%An $\mathbb{N}$-valued variable with $\p(D\geq k)\leq (9\lambda)^{k}
%\left(
%\nicefrac{13}{\sqrt{\lambda}}
%\right)$ necessarily has %
%\[
%\e(D)\leq\e(D^2)\leq
%\sum_{k=1}^\infty
%2k
%(9\lambda)^{k}
%\frac{13}{\sqrt{\lambda}}
%=
%O(\sqrt{\lambda}).
%\]
%
%
%So far this was with $r=d=1$ but general $r$ and $d$ just introduce constant factors in the moments above that do not affect the orders. In the end by Lemma \ref{lem_gw_moment} the second moment of the number of vertices in the full tree is $1+O(\sqrt{\lambda})$.
%\end{proof}

We immediately derive the following bound.

\begin{corollary}\label{cor_h_squared}
For any $i \in [n]$ and at stationarity,
$\e(|\cH^{(i)}|^2)\leq 2$ when $\lambda$ is sufficiently small (depending on $r$ and $d$).
\end{corollary}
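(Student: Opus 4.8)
The plan is to deduce this directly from the geometric tail bound of Proposition \ref{prop_max_cpt} via the layer-cake identity for second moments. Writing $X:=|\cH^{(i)}|$, which takes values in $\{1,2,\dots\}$, I would start from
\[
\e(X^2)=\sum_{k=1}^\infty (2k-1)\,\p(X\geq k),
\]
valid for any $\mathbb{N}$-valued random variable since $\sum_{k=1}^n(2k-1)=n^2$. The $k=1$ term contributes $\p(X\geq 1)\leq 1$, and for $k\geq 2$ I would insert $\p(X\geq k)\leq (4m+1)(18m^2\lambda)^{k-1}$ from Proposition \ref{prop_max_cpt}.

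Setting $a:=18m^2\lambda$, the tail contribution is $(4m+1)\sum_{k\geq 2}(2k-1)a^{k-1}$, which after the substitution $j=k-1$ becomes $(4m+1)\sum_{j\geq 1}(2j+1)a^j=(4m+1)\big(\tfrac{2a}{(1-a)^2}+\tfrac{a}{1-a}\big)$. For fixed $m=1+d(2r-1)$ this quantity tends to $0$ as $\lambda\to 0$, so it is at most $1$ once $a$ is below an absolute threshold, i.e.\ once $\lambda$ is small enough depending on $r$ and $d$. Combining with the $k=1$ term gives $\e(X^2)\leq 1+1=2$, as claimed; this also confirms en route that the second moment is finite, since the series converges geometrically.

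There is no real obstacle: all the work sits in Proposition \ref{prop_max_cpt}, and this corollary is a one-line summation. The only point worth noting is that the bound is the constant $2$ rather than something vanishing because Proposition \ref{prop_max_cpt} says nothing at $k=1$, where one must fall back on the trivial $\p(X\geq 1)\leq 1$; that unavoidable unit is what the statement's ``$\leq 2$'' leaves room for.
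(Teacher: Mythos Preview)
Your argument is correct and is exactly the paper's approach: the paper's proof consists of the single line ``Insert Proposition \ref{prop_max_cpt} into the expression $\e(\cH^2)=1+\sum_{k=2}^\infty (2k-1)\p(\cH \geq k)$,'' which is precisely your layer-cake computation with the details left implicit. The only cosmetic difference is that the paper writes the $k=1$ term as exactly $1$ (since $|\cH^{(i)}|\geq 1$ always) rather than bounding $\p(X\geq 1)\leq 1$.
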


\begin{proof}
Insert Proposition \ref{prop_max_cpt} into the expression
$
\e(\cH^2)=1+
\sum_{k=2}^\infty
(2k-1)
\p(\cH \geq k).
$
\end{proof}

Now we can prove the result from Section \ref{sec_results} on independence of sampled queues, by checking when their information sets are disjoint.

\begin{proof}[Proof of Proposition \ref{prop_general_indep}]
At stationarity, the queueing system can be generated with the exploration of $\cH=\{\cH^{(i)}:i \in [n]\}$, finding new conditionally independent queues at every new vertex.

The same can be done for the $n=\infty$ version of this model where every vertex has a new label. In that version by construction we never repeat a label and every root had independent queue length drawn from $\eta_\kappa$, but if the finite $n$ version of the model never repeats a label then we can couple the two explorations.

From Corollary \ref{cor_h_squared}, this expected number of label clashes is at most
\[
\frac{1}{n}
\e
\left(
\sum_{i=1}^x
\cH^{(i)}
\right)^2
=
O\left(
\frac{x^2}{n}
\right).
\]

We conclude by Markov's inequality that $x$ subcritical explorations from distinct vertices repeat a label with the claimed probability. This proves Proposition \ref{prop_indep}, and the argument is unaffected by the addition of $d-1$ extra layers to the model.
\end{proof}

The previous proof used the information sets $\cH$ to separate over the space of the graph, but if we think of this separation in just the time direction then what we find is a bound on mixing.

\begin{proof}[Proof of Proposition \ref{prop_general_mixing}]
In each $2r$-hyperedge put an arbitrary perfect matching of $r$ edges. In this way the dynamic of Definition \ref{def_dynamic_hypergraph} can be coupled to $r \times d$ dynamic perfect matchings.

Think of each matching as encoded by a length $n$ list of all its edges, so that an odd index is paired with the consecutive even index. Then the dynamic on this list is exactly a uniform random transposition (allowing probability $\nicefrac{1}{n}$ that an element swaps with itself and so there is no transposition). Hence by the union bound and \cite{diaconis1981generating}, from arbitrary initial graph $g$, $r \times d$ such shuffles will mix in time $O(\nicefrac{\log n}{\kappa})$. Moreover one of these hyper-edges will not move until time $\Omega_{\p}(\nicefrac{\log n}{\kappa})$ and so the system cannot have mixed.

After graph mixing, allowing further time $C\log n$ with $C$ large enough, we are with probability $1-\epsilon$ in an event where exploring $\cH$ doesn't go back in time far enough to see the graph before stationarity. This follows from the exponential tails of $\cO^{(i)}$ intervals combined with Proposition \ref{prop_max_cpt} to say the number of relevant $i$ also has an exponential tail. Hence, the current state is independent from $q$.

For the lower bound in the case of small $C$, when also $\kappa\rightarrow\infty$ so that the lower bound on this term is meaningful, we see a queue that has had no arrivals or servings and so still has its initial value from $q$.
\end{proof}

\section{Lower bounds on queue length}

As we remarked in Section \ref{sec_results}, we will prove only the lower bound for Proposition \ref{prop_general_queuelength} in this section, that is the lower bounds for the graph model of Definition \ref{def_dynamic_hypergraph}. We continue in this section to write $m:=1+d(2r-1)$.

\begin{definition}[zero-on-update process]
This Markov chain on $\mathbb{N}^{m}$ is a system of queues on the \emph{$2r$-uniform hyperstar}, i.e. the graph of a $1$-ball in our system in which a central vertex has $m-1$ unique neighbours distributed equally among $d$ hyperedges. The first dimension of $\mathbb{N}^{m}$ is the central queue.

Arrivals and servings on this graph are as usual with the supermarket model (rates $\lambda$ and $1$ per vertex, with moving to the shortest neighbour), but we simplify updates. An update at rate $\kappa$ of each neighbour zeroes that incident queue, or at the centre we have $d$ clocks (each also of rate $\kappa$) that each zero all $2r-1$ incident queues of their edge.
\end{definition}

On the full system, we could restrict arrivals to only rate $\lambda$ at the central vertex to be able to lower bound the system by the independent product of $n$ of the processes above. In this section there is no need, we can demonstrate the lower bound on a single queue.

\begin{proposition}\label{prop_zeroqueue}
The zero-on-update process has marginal stationary distribution $\rho$ of the central queue, with 
\[
\rho(x)\geq
\frac{\lambda\left(
1-\lambda m
\right)}{m(\lambda m+1)^2}
\left(
\frac{\lambda m}{\lambda m+m+2rd\kappa}
\right)^{x m}.
\]
\end{proposition}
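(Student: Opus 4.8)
The plan is to view $\rho(x)$ as a marginal of the stationary law $\pi$ of the zero-on-update chain on $\mathbb{N}^m$ and to bound $\pi$ below on one convenient configuration with central coordinate $x$, the natural choice being the balanced state $\mathbf{x}=(x,\dots,x)$, so $\rho(x)\ge\pi(\mathbf x)$. I would then drive the chain from the empty configuration $\mathbf 0$ up to $\mathbf x$ one level at a time, $(j,\dots,j)\mapsto(j+1,\dots,j+1)$, pricing each level by iterating the elementary consequence $\pi(b)\ge \pi(a)\,r(a\to b)/Q(b)$ of the global balance equations (with $Q(\cdot)$ the total exit rate) along the directed graph of intermediate configurations obtained by raising the $m$ queues one by one.

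Two features fix the shape of the bound. The denominator is uniform: at every configuration $Q(\cdot)\le \lambda m+m+2rd\kappa$, since arrivals contribute $\lambda m$, services at most $m$, and the zeroings exactly $d\kappa+d(2r-1)\kappa=2rd\kappa$ (the $d$ central clocks plus the $m-1$ neighbour clocks). The constant and the $(1-\lambda m)$ come from the bottom of the induction: since every arrival raises the total load $S_t=\sum_i Q^{(i)}_t$ by exactly $1$ at rate $\lambda m$ and every service or zeroing lowers it by at least $1$, dominating $S$ by an $M/M/1$ queue of parameters $(\lambda m,1)$ as in Section~\ref{sec_dependence} (the zeroings only help) gives $\pi(\mathbf 0)=\p(S=0)\ge 1-\lambda m$ when $\lambda m<1$; the residual $\lambda/(m(\lambda m+1)^2)$ is the price of the first one or two arrivals that lift the central queue off $0$, where the competing rates are still only $\lambda m+1$ (arrivals plus the lone central service) and only an $O(1/m)$ fraction of the routing is favourable.

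For the generic level I would use the supermarket routing rule on the hyperstar — a leaf's neighbourhood is its $2r$-edge, the centre's is the whole star — to see that whichever queue is currently lowest is exactly the one a suitably chosen arrival is routed to, so each of the $m$ increments has numerator a routing-weighted multiple of $\lambda$; then, crucially, summing the balance inequality over the several intermediate configurations that re-merge at the next all-equal level should recoup the $1/m$-type routing probabilities, so that the net cost of a level collapses to $\big(\tfrac{\lambda m}{\lambda m+m+2rd\kappa}\big)^{m}$. Multiplying over the $x$ levels and attaching the bottom prefactor gives the claim.

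The step I expect to be the main obstacle is precisely this collapse. Since the hyperstar is not a clique, the one-at-a-time raising must be organised to avoid ``overshoot'' transitions (an arrival inside an edge whose queues have all already been raised), and one has to verify that, after summing over the admissible orderings of increments, the products of routing probabilities and of the configuration-dependent transition rates recombine exactly into the stated closed form rather than leaving an $x$-dependent constant behind. The other ingredients — the $M/M/1$ comparison, the uniform exit-rate bound, and the small-$\lambda$ bookkeeping of the prefactor — are routine.
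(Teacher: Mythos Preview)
Your skeleton is right (the $M/M/1$ comparison giving $\pi(\mathbf 0)\ge 1-\lambda m$, the uniform exit-rate bound $Q\le \lambda m+m+2rd\kappa$, and iteration of the one-sided balance inequality along a path), but the path you choose is precisely the wrong one, and the obstacle you flag is fatal for $d>1$.

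From a balanced state $(j,\dots,j)$ every arrival meets a tie, so the routing is maximally random. Your ``collapse'' needs the total arrival rate out of each layer $S_i$ into $S_{i+1}$ to equal the full $\lambda m$; but once the centre has already been raised and some hyperedge lies entirely at level $j{+}1$, an arrival at a leaf of that edge overshoots to $j{+}2$ and leaves the ladder. For $d=2$, $r=1$ a direct computation of the three-step sum gives only $\tfrac{59}{81}\,(\lambda m/Q)^m$ per level, so iterating over $x$ levels introduces an extra factor $(59/81)^x$ which is not in the statement. (When $d=1$ the hyperstar is a single edge, the vertices are exchangeable, there is no overshoot, and your collapse is exact --- so your argument proves the proposition only in that case.)

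The paper avoids this by running the path through maximally \emph{unbalanced} states and by working with the marginal $\rho$ rather than the full $\pi$. After the cheap step $\rho(1)\ge \lambda(1-\lambda m)/(\lambda m+1)$, it argues that from any configuration with central height $k$ the chance of pushing the centre to $k+x$ is at least what it is from the worst case $(k,0,\dots,0)$; starting there, a single uninterrupted run of $N=x+(m-1)(k+x-1)$ arrivals reaches $(k+x,k+x-1,\dots,k+x-1)$ with conditional probability at least $1/m$, because for almost the entire run the leaves are \emph{strictly} shorter than the centre and each arrival is routed to a strict minimum, with only one tie at the very end. Thus the $1/m$ is paid once for the whole path $1\to x$, not once per level, and combined with the exit-rate bound $\lambda m+1$ for the central queue this yields exactly the prefactor $\lambda(1-\lambda m)/\bigl(m(\lambda m+1)^2\bigr)$. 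The moral: go through states where the centre is ahead of the leaves (routing forced), not through states where everyone is tied (routing random).
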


\begin{proof}
Tasks arrive at most at rate $\lambda m$, and are completed at rate $1$. 
Hence for $\lambda m<1$ we get by comparison to the biased random walk
\begin{equation}\label{eq_rw_with_m}
\rho(0)\geq 1-\lambda m,
\end{equation}
from where the first queue increases to length $1$ at rate at least $\lambda$ and then leaves $1$ at rate at most $\lambda m+1$. Therefore
\[
\rho(1)\geq \frac{\lambda}{\lambda m+1}\left(
1-\lambda m
\right).
\]

From central queue length $k\geq 1$, which we can lower bound by state $(k,0,\dots, 0)$ on the $m$ neighbourhood queues, leaving the state follows a path to increase the first queue to $k+x$ in an event of probability
\[
\frac{1}{m}
\left(
\frac{\lambda m}{\lambda m+m+2rd\kappa}
\right)^{x+(m-1)(k+x-1)}
\]
after which we are in state $(k+x,k+x-1,\dots,k+x-1)$. The first queue then leaves state $k+x$ at rate bounded by $\lambda m+1$ and so this event shows
\[
\rho(k+x) \geq
\frac{1}{m(\lambda m+1)}
\left(
\frac{\lambda m}{\lambda m+m+2rd\kappa}
\right)^{x+(m-1)(k+x-1)}
\rho(k)
\]
because we also leave any state $k\geq 1$ at rate at least $1$. Setting $k=1$ gives the claimed inequality.
\end{proof}

Already we can prove the lower bound orders in our main theorem.

\begin{corollary}\label{cor_lower_bound}
From initially the empty queue system, over any timescale $n^{\Theta(1)}$ any particular queue sees length
\[
\Omega_{\p}\left(
\frac{\log n}{\log \kappa}
\right)
\]
except that it sees $\Omega_{\p}\left(
\log \log n
\right)$ when
\[
\log \kappa = \Omega\left(
\frac{\log n}{\log \log n}
\right)
\]
and $\Omega_{\p}(\log n)$ when $\kappa=O(1)$.
\end{corollary}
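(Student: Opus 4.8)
The plan is to use Proposition \ref{prop_zeroqueue} to get a uniform lower bound on the probability that a single queue reaches a target length $k$ in the stationary zero-on-update process, and then to upgrade ``positive stationary probability'' to ``actually observed over a polynomial timeframe'' by an independence-over-time argument. First I would fix a particular vertex and recall that (as remarked immediately after the definition of the zero-on-update process) by restricting arrivals to the central vertex the full supermarket system stochastically dominates the independent product of $n$ copies of the zero-on-update chain; so it suffices to prove the lower bound for one such chain. Starting the system empty rather than stationary costs only an $O(1)$ burn-in: after time $O(1)$ the central marginal dominates something within constant total-variation of $\rho$, or more simply we can just wait a constant time and couple to stationarity from below using the $\rho(0) \geq 1 - \lambda m$ estimate.

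The core computation is to plug a target $k = k(n)$ into the bound
\[
\rho(k) \geq
\frac{\lambda(1-\lambda m)}{m(\lambda m+1)^2}
\left(\frac{\lambda m}{\lambda m + m + 2rd\kappa}\right)^{km}
\]
and ask when this is at least $n^{-c}$ for a small constant $c$. Taking logarithms, $km\log\!\big(\tfrac{\lambda m + m + 2rd\kappa}{\lambda m}\big) = km\,\Theta(\log\kappa)$ (for $\kappa \to \infty$; for $\kappa = O(1)$ this factor is $\Theta(1)$), so $\rho(k) \geq n^{-c}$ holds for $k = \Theta\!\big(\tfrac{\log n}{\log\kappa}\big)$, respectively $k = \Theta(\log n)$ when $\kappa = O(1)$. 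For the doubly-logarithmic regime I would iterate: from central length $k$ the state $(k,0,\dots,0)$ is reached and then, crucially, the exponent in Proposition \ref{prop_zeroqueue} grows like $(m-1)(k+x-1)$, i.e. each extra unit of height costs a factor that is itself geometric in the current height — so a more careful chaining of the one-step estimates (rather than the single bound as stated, which already telescopes the neighbours to $0$) gives $\rho(k) \geq$ something like $\big(\tfrac{\lambda m}{\cdots}\big)^{\Theta(m^k)}$-type behaviour only when $\kappa$ is polynomially large; setting this $\geq n^{-c}$ yields $k = \Theta(\log\log n)$ precisely when $\log\kappa = \Omega(\log n/\log\log n)$. (The clean way is: when $\kappa = n^{\Omega(1/\log\log n)}$ the per-level cost $\log\kappa$ times the number of levels $\log\log n$ is $\Omega(\log n)$, so we cannot beat $\log\log n$; and $\log\log n$ is always attainable because after reaching height $j$ there is $\Omega(1)$ probability — uniformly in $\kappa$ large — of a burst doubling it, using the power-of-two-choices mechanism as in \cite{mcdiarmid2006}.)

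Finally, to pass from stationary probability $p_n := \rho(k) \geq n^{-c}$ to ``observed over $[0,n^C]$'', I would use that the zero-on-update chain, run from empty, regenerates: it returns to the all-zero state at rate $\Omega(1)$ (since the central queue is empty a positive fraction of the time and neighbours zero out at rate $\kappa \geq$ const, or trivially at rate $1$ from servings), and between consecutive regenerations there is an $\Omega(p_n)$ chance that the central queue reaches $k$ during that excursion. Over $[0,n^C]$ there are $\Omega(n^C)$ such excursions, independent across the regeneration structure, so with $c < C$ the probability that none of them reaches $k$ is at most $(1-\Omega(p_n))^{\Omega(n^C)} = e^{-\Omega(n^{C-c})} \to 0$. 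Taking a union bound in the wrong direction is not needed here since one queue suffices for a lower bound; but if one wants it for \emph{a particular} queue this is already what is claimed. The main obstacle I expect is the $\log\log n$ regime: the single inequality in Proposition \ref{prop_zeroqueue} as written only gives $\Omega(\log n/\log\kappa)$, so getting the doubly-logarithmic lower bound requires re-deriving a chained lower bound on $\rho$ that keeps track of the geometric-in-height exponent and then checking the threshold $\log\kappa \asymp \log n/\log\log n$ is exactly where $\Theta(\log\log n)$ overtakes $\Theta(\log n/\log\kappa)$ — together with arguing that the power-of-two bursts that drive the $\log\log n$ growth survive the ``zero-on-update'' simplification, which is where the comparison to the static/infinite-speed model of \cite{mcdiarmid2006} does the real work.
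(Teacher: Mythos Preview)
Your approach for the first two regimes is essentially the paper's: both use Proposition~\ref{prop_zeroqueue} to lower bound the stationary probability at the target level, then upgrade to an almost-sure visit over $[0,n^C]$ (the paper invokes the mixing time of Proposition~\ref{prop_general_mixing} to resample periodically, you use regeneration of the zero-on-update chain; either works). For $\kappa=O(1)$ the paper actually takes a different route --- a spatial splitting into $\Omega(n)$ disjoint neighbourhoods, each hosting a biased random walk --- but your argument via Proposition~\ref{prop_zeroqueue} with $\kappa=O(1)$ (so the base of the exponent is a constant in $(0,1)$) is equally valid and arguably cleaner for the ``particular queue'' formulation.

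The genuine gap is the $\log\log n$ regime. You propose to chain the estimates inside Proposition~\ref{prop_zeroqueue} more carefully to extract a bound of shape $\rho(k)\gtrsim c^{m^k}$, but the zero-on-update process cannot deliver this. The doubly-exponential tail is driven by the power-of-two-choices mechanism: to pass from level $k$ to $k+1$ at cost $\sim\pi(k)^{m-1}$ you must be able to \emph{find} $m-1$ neighbours already at level $k$, and in the zero-on-update process every update resets the neighbour to $0$ rather than resampling it from something like $\pi$. Building each zeroed neighbour back up to level $k$ by arrivals costs an exponent linear in $k$ --- this is exactly the $(m-1)(k+x-1)$ term you quote --- so the best bound on $\rho$ remains of the form $\rho(k)\gtrsim(\text{base depending on }\kappa)^{\Theta(k)}$, i.e.\ the $\log n/\log\kappa$ bound you already have, not $\log\log n$. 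Your closing caveat that one must check ``the power-of-two bursts \dots\ survive the zero-on-update simplification'' is precisely the point: they do not.

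The paper sidesteps this by abandoning the zero-on-update minorisation for this regime and working with the true marginal $\pi$ directly: at a queue of length $k$, wait for $m-1$ updates (cheap when $\kappa$ is large, costing a factor $(\kappa/(\kappa+m))^{m-1}$), condition on each freshly-sampled neighbour also having length $k$ (probability $\pi(k)$ each), then take one arrival. This gives
\[
\pi(k+1)\;\geq\;\pi(k)^{m}\left(\frac{\kappa}{\kappa+m}\right)^{m-1}\frac{\lambda}{\lambda+m},
\]
which iterates to $\pi(k)\geq c^{m^k}$ and hence $\pi(k)\geq n^{-\epsilon}$ at $k=\Theta(\log\log n)$, uniformly over large $\kappa$. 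The temporal upgrade then uses mixing of the full system rather than regeneration of the zero-on-update chain, since the relevant stationary measure is now $\pi$, not $\rho$.
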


\begin{proof}
\textbf{1.} First consider $\kappa=O(1)$. Split the graph into $\Omega(n)$ disjoint neighbourhood, and lower bound the sum of a neighbourhood by the biased random walk $W_t$ decreasing at rate $m$ and increasing at rate $\lambda$, up until the neighbourhood is broken at rate $\kappa r d$. 

This walk has no downsteps and we also see no update, in time $t=\epsilon\log n$, with probability
\[
e^{-(m+\kappa r d)t}
=
n^{-\epsilon(m+\kappa r d)}
\]
so for small enough $\epsilon$ we have $n^{\Omega_{\p}(1)}$ such queues. Each lower bound walk conditionally has Poisson distribution with mean $t\lambda$ and so with high probability one of them is $\Omega_{\p}(\log n)$.

\textbf{2.} Next consider $\kappa$ at least a constant but still $
\log \kappa = O\left(
\frac{\log n}{\log \log n}
\right).
$ Proposition \ref{prop_zeroqueue} gives the result here, because the zero-on-update process lower bounds the length of a particular queue.

More precisely, it bounds the stationary mass and so the expected total length of time that a particular queue spends at the claimed order. This is turned into a high probability lower bound by using the mixing of Proposition \ref{prop_general_mixing} to resample periodically from that stationary mass, where for any $\epsilon>0$ there is some level $\Omega\left(
\nicefrac{\log n}{\log \kappa}
\right)$ such that we find each sample has at least $n^{-\epsilon}$ chance to hit that level.

\textbf{3.} For the remaining case of larger order $\kappa$, we consider the event that, at a queue of length $k$, before an arrival at the queue we see $m-1$ updates and find $m-1$ neighbours of the same length. During this process, repeated updates at the $2rd$ half-edges that make up the neighbourhood are fine: we check once that each neighbour has length $k$ at the time of the final arrival. What we must not see is a service at any of the $m$ vertices while each updates, and then one arrival at the centre. This produces
\[
\begin{split}
\pi(k+1)
&\geq
\pi(k)^{m}
\left(
\frac{\kappa}{\kappa+m}
\right)^{m-1}
\frac{\lambda}{\lambda+m}%
\geq
\frac{\pi(k)^{m}}{\left(
1+\frac{m}{\kappa}
\right)^{m-1}}
\cdot
\frac{\lambda}{m+\lambda}
\end{split}
\]
which we can iterate to find
\[
\begin{split}
\pi(k)
&\geq
\pi(0)^{m^k}
\left[
\frac{1}{\left(
1+\frac{m}{\kappa}
\right)^{m-1}}
\cdot
\frac{\lambda}{m+\lambda}
\right]^{1+m+\dots+m^{k-1}}\\
&=
\left(
\frac{\pi(0)}{1+\frac{m}{\kappa}}
\right)^{m^k}
\left(
1+\frac{m}{\kappa}
\right)
\left[
\frac{\lambda}{m+\lambda}
\right]^{\frac{m^k-1}{m-1}}\\
&\geq
\left(
\frac{\pi(0)}{1+\frac{m}{\kappa}}
\cdot
\frac{\lambda^{1/m}}{m^{1/m}}
\right)^{m^k}
\left(
1+\frac{m}{\kappa}
\right)
\left[
\frac{\lambda}{m+\lambda}
\right]^{\frac{-1}{m-1}}.%
\end{split}
\]

Recall from \eqref{eq_rw_with_m} that $\pi(0)\geq 1-m\lambda>0$, requiring of course that $\lambda$ is sufficiently small. So we infer from the above display, for any small $\epsilon>0$, we can find some level $k=\Omega_{\p}\left(
\log \log n
\right)$ with $\pi(k)\geq n^{-\epsilon}$. We conclude 
 using the mixing of Proposition \ref{prop_general_mixing}, as in the previous case.%
\end{proof}

\section{Upper bounds on queue length}\label{sec_upper}

In this section we prove the upper bounds, which are more difficult. 
First we have an observation of the stochastic comparison that is available here, an extension of what \cite[Theorem 4]{turner98} states for the $\kappa=\infty$ model.

\begin{proposition}\label{prop_domination}
The number of customers with position at least $k$ in the supermarket model on the dynamic simple regular uniform configuration hypergraph (Definition \ref{def_dynamic_hypergraph} for any $r,d\geq 1$) is stochastically upper bounded by the number of customers with position at least $k$ in the supermarket model on a dynamic perfect matching (Definition \ref{def_dynamic_partition} with $r=1$) whenever the initial conditions are the same.

Hence, by considering the large time limit, we have domination of the two stationary distributions. Therefore we have domination of the two processes from their respective stationarity distributions.
\end{proposition}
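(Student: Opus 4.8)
The plan is to build one coupling of the two supermarket processes, started from a common initial queue configuration, under which the number of customers at height at least $k$ in the hypergraph system is dominated by that in the matching system, \emph{pathwise and simultaneously for all $k$}. Writing $q_{(1)}(t)\ge q_{(2)}(t)\ge\dots\ge q_{(n)}(t)$ for the decreasingly sorted queue-length vector of a system at time $t$, it is enough to maintain the invariant $q^{\mathrm{hyp}}_{(i)}(t)\le q^{\mathrm{match}}_{(i)}(t)$ for every rank $i$ and every $t\ge 0$: summing the inequality $(q^{\mathrm{hyp}}_{(i)}-k+1)^+\le(q^{\mathrm{match}}_{(i)}-k+1)^+$ over $i$ gives exactly the domination of the count of customers at position $\ge k$.

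For the graph coupling I would use the Remark following Definition~\ref{def_dynamic_hypergraph}: the dynamic $d$-regular $2r$-uniform hypergraph is obtained from $d$ dynamic perfect matchings, one per label, with the grouping of $r$ edges into each hyperedge superimposed, and the restriction of its dynamics to a single label is precisely a dynamic perfect matching as in Definition~\ref{def_dynamic_partition} with $r=1$ (the per-edge move rates agree). So I run the matching system on the label-$1$ matching $M_1$ of the hypergraph with coupled dynamics, letting the grouping and the labels $2,\dots,d$ evolve on their own; this ensures $\{v,M_1(v)\}\subseteq\mathcal{N}^{\mathrm{hyp}}_t(v)$ for every vertex $v$ and time $t$. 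Servings are then coupled \emph{by rank}: a rate-$1$ clock on each rank $i\in[n]$, serving the current $i$-th longest queue of each system when it is nonempty. By vertex-exchangeability this reproduces the correct marginal law of the sorted-vector dynamics, and since $x\mapsto(x-1)^+$ is nondecreasing and re-sorting preserves componentwise order, a serving preserves the invariant; graph moves do not touch queues, so they are harmless. (Per-\emph{vertex}-coupled servings would not work, since the invariant is only a rank-wise, not a vertex-wise, order.)

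Arrivals are coupled through their common Poisson clock. The engine is the standard monotonicity lemma for sorted occupancy vectors: if $u\le v$ componentwise with both sorted decreasingly, and one increments $v$ at a rank no larger than the rank at which one increments $u$, then after re-sorting one still has $u\le v$. It would thus suffice to route the hypergraph task to a queue whose length is $\le$ the length of the queue the matching task joins (equivalently, of rank at least as large); and the matching task joins a shortest queue of $\{v,M_1(v)\}$ while the hypergraph task is free to join a shortest queue of the larger set $\mathcal{N}^{\mathrm{hyp}}_t(v)$, so morally it does at least as well. This is the extension of \cite[Theorem 4]{turner98} past the complete graph, and the arrival step is where I expect the main obstacle to lie. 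The difficulty is that the invariant one can hope to carry through is only the rank-wise order $q^{\mathrm{hyp}}_{(\cdot)}\le q^{\mathrm{match}}_{(\cdot)}$, not a vertex-wise order: a hypergraph arrival may be forced onto a vertex $u$ with $Q^{\mathrm{hyp}}(u)=Q^{\mathrm{match}}(u)$, so one cannot naively compare $\min_{\{v,M_1(v)\}}Q^{\mathrm{hyp}}$ with $\min_{\{v,M_1(v)\}}Q^{\mathrm{match}}$. The two routing decisions must be compared at the level of ranks and tie-breaks while the routing constraints live at the level of vertices and of a local, random graph; reconciling these---maintaining, alongside the sorted-vector order, enough joint structure (the graph embedding together with a compatible vertex correspondence) to guarantee a valid insertion rank for the hypergraph task at each arrival---is the heart of the argument.

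Finally, for the passage to stationarity: in either system the queue vector is dominated coordinatewise by $n$ independent M/M/1 queues of load $<1$ (the domination lemma at the start of Section~\ref{sec_dependence}, valid for $\lambda$ small, with parameter $m\lambda$ in the hypergraph case and $2\lambda$ in the matching case), and the underlying graph is a finite irreducible Markov chain, so each joint (queue, graph) process is positive recurrent and converges in distribution to its unique stationary law. Running the coupling from, say, the all-empty configuration and letting $t\to\infty$ transfers the pathwise tail-count domination to the two stationary distributions. For the two processes \emph{started from} their stationary laws, domination of every time-$t$ marginal is then immediate, since each such marginal is the stationary one; if a pathwise statement along trajectories is wanted, one instead restarts the coupling from initial states drawn from the (now ordered) stationary laws, coupled by Strassen's theorem so that the invariant holds at time $0$, after which it is preserved for all $t\ge0$.
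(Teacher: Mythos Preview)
Your graph coupling---embedding a perfect matching inside the label-$1$ layer of the hypergraph so that $\{v,M_1(v)\}\subseteq\cN^{\mathrm{hyp}}_t(v)$ at all times---is exactly the paper's construction. The paper is in fact briefer than you: it couples servings and arrivals per vertex and concludes in one line that ``any arrival \dots\ will join a shorter or equal queue in the hypergraph than in the graph''. Read literally this needs a vertex-wise comparison $Q^{\mathrm{hyp}}(v)\le Q^{\mathrm{match}}(v)$, which, as you note, cannot be maintained once the two routings diverge (and one can write down a short trajectory on four vertices where, after a few arrivals, the hypergraph system has every queue at height~$1$ while the matching system still has a height-$0$ queue in the relevant pair, so the next arrival lands strictly higher in the hypergraph). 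Per-vertex servings then break the $N_k$ order as well.

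So the obstacle you flag at the arrival step is real, and the paper's own proof glosses over exactly the same point rather than resolving it; both accounts lean implicitly on the Turner framework cited just before the statement. Your refinements---carrying the sorted-vector (weak majorization) invariant and coupling services by rank---are the right direction and are what Turner does on the complete graph; the remaining difficulty, reconciling a rank-based invariant with graph-local routing, is the genuine crux, and your sketch is at least as complete as the paper's while being more candid about where it lies. Your treatment of the passage to stationarity is also more careful than the paper's.
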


\begin{proof}
Because our hyperedges have even order $2r$, we can always cut them up uniformly into $r$ edges in an associated configuration model. Do this just for the hyperedges of type $1$, with the same dynamic of swapping half-edges. As the half-edges move together, the subgraph relationship is preserved. Couple the two dynamics in this way, and also couple the serving times and arrivals.

Then, because the subgraph relationship is preserved, any arrival of a task in the two models will always have a selection of queues in the hypergraph that includes the two queues available in the graph, and so will join a shorter or equal queue in the hypergraph than in the graph.
\end{proof}

Of course, this domination means that the maximum, i.e. the $k$ for which number of customers with position at least $k+1$ is $0$, is also in stochastic comparison between the two environments. Therefore to upper bound the maximum, in this section, we can take $r=d=1$.

We can now proceed with the analysis of a supermarket model on the dynamic $1$--regular configuration model. Consider the stationary distribution of a particular queue $\pi$, by exchangeability the same for every queue. Define also the distribution
\[
\eta_\kappa=\lim_{n\rightarrow\infty}\pi
\]
by the pointwise limit. This is also the stationary distribution of the $n$-intertwined mean field approximation for this model \cite{keliger2024concentration}. %

\begin{remark}\label{rem_random_walk}
When $\lambda<\nicefrac{1}{2}$, we can stochastically dominate the queue at a vertex by supposing that it all possible tasks (at rate $2\lambda$) join that queue. Hence we have stochastic dominations $\pi\preceq g$ and $\eta_\kappa\preceq g$ by the biased random walk stationary distribution
\[
g(x)=(1-2\lambda)(2\lambda)^{x}\1_{x\geq 0}.
\]

In this section we discuss only the dynamic perfect matching, but by Proposition \ref{prop_domination} this upper bound also applies to the model of Definition \ref{def_dynamic_hypergraph} which contains the matching.
\end{remark}

 For the simplified $n=\infty$ context we can say more in the following proposition.

\begin{proposition}\label{prop_stat_upper_bound}
For $\lambda$ small enough, $\kappa$ large enough and some $R=\Theta\left(\log \log \kappa \right)$ we have
\[
\eta_\kappa([k,\infty))\leq 
\begin{cases}
2\left(
2\lambda
\right)^{2^k-1}, & k\leq R,\\\left(
\nicefrac{3\lambda}{\kappa}
\right)^{ \nicefrac{k}{2} }, & k > R.\\
\end{cases}
\]
\end{proposition}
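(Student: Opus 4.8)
Write $p_k:=\eta_\kappa([k,\infty))$. By Proposition~\ref{prop_domination} it suffices to work with the dynamic perfect matching ($r=d=1$), so a vertex's neighbourhood is $\{$itself, its current partner$\}$. In the $n\to\infty$ limit, the propagation of chaos afforded by the construction of Section~\ref{sec_dependence} (as in the proof of Proposition~\ref{prop_indep}) lets me describe a tagged queue $Q$ together with the queue $Q'$ of its current partner as a self-consistent mean-field pair: $(Q,Q')$ runs the isolated two-queue supermarket dynamics, and at rate $2\kappa$ the edge is re-drawn, one endpoint being replaced by an independent $\eta_\kappa$-sample; the stationary marginal of either coordinate is $\eta_\kappa$. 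First I would record a flux balance across the cut $\{Q\le k\}$ in this stationary pair: the re-draw contributions cancel (a re-draw samples from $\eta_\kappa$ itself), so the rate of $k\to k+1$ steps of $Q$, which is $\le 2\lambda\,\p(Q=k,Q'\ge k)$, equals the rate of $k+1\to k$ steps, which is $\p(Q=k+1)$; since all birth rates are $\le 2\lambda$,
\[
p_{k+1}\ \le\ \frac{1}{1-2\lambda}\,\p(Q=k+1)\ \le\ \frac{2\lambda}{1-2\lambda}\,\p\big(Q\ge k,\ Q'\ge k\big).
\]
Everything then reduces to bounding the joint upper tail of the two endpoints of a random edge.

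\textbf{Regime $k\le R$ (power of two choices).} Since $\kappa$ is large the re-draw rate dwarfs the arrival rate, so I would show $\p(Q\ge k,Q'\ge k)\le 2p_k^2$: write the joint probability as $\p(Q\ge k)\p(Q'\ge k)=p_k^2$ plus a correction coming from the current edge having positive age, and bound that correction, conditionally on the age (which is $\mathrm{Exponential}(2\kappa)$), by the probability that the shorter of the two queues climbs to level $k$ within that window --- an event of probability $O(\lambda/\kappa)$ per required level, by the biased-walk domination of Remark~\ref{rem_random_walk} and the tail of Proposition~\ref{prop_max_cpt}. This is available precisely while $p_k\gtrsim\lambda/\kappa$, and I would \emph{define} $R$ as the largest such $k$; since $p_k$ has order $(2\lambda)^{2^k}$ on that range, $R=\Theta(\log\log\kappa)$. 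The displayed inequality then gives $p_{k+1}\le c\lambda p_k^2$ for $k\le R$, and iterating from $p_0=1$ and $p_1\le 2\lambda$ (Remark~\ref{rem_random_walk}) yields $p_k\le 2(2\lambda)^{2^k-1}$ on this range, provided $\lambda$ is small enough that $c\lambda\le 1$.

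\textbf{Regime $k>R$ (robust tail).} Below the scale $p_k\sim\lambda/\kappa$ the squaring step is no longer justified and I would fall back on a conservative estimate: a fresh partner cannot be built up much within a single edge lifetime, each extra level it must reach costing a factor $O(\lambda/\kappa)$ (an arrival landing in the $\mathrm{Exponential}(2\kappa)$ window before the next re-draw), so the displayed inequality loses only a multiplicative factor $\le\sqrt{3\lambda/\kappa}$ per level once $k>R$. As $R$ sits at the crossover of the two target estimates, $p_R=O\big((3\lambda/\kappa)^{R/2}\big)$, and iterating $p_{k+1}\le\sqrt{3\lambda/\kappa}\,p_k$ from $k=R$ gives $p_k\le(3\lambda/\kappa)^{k/2}$ for $k>R$. (This bound is deliberately lossy --- the true decay stays essentially doubly exponential --- but it is all the queue-length upper bounds require.)

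\textbf{Main obstacle.} The crux is the regime $k\le R$: making the reduction to the mean-field pair rigorous \emph{uniformly down to exponentially small probabilities} --- exactly what the $\cH$-exploration and the exponential tails of Proposition~\ref{prop_max_cpt} are for --- and then controlling the interaction correction to $p_k^2$, i.e. quantifying ``the partner is fresh enough.'' That correction is the $\kappa$-dependent obstruction that both forces the cutoff at $R$ and fixes its value, and arranging the constants so the doubly-exponential bound at $k=R$ hands over cleanly to $(3\lambda/\kappa)^{k/2}$ is the remaining bookkeeping.
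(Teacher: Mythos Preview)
Your flux-balance set-up and the idea of decoupling at the last re-draw are exactly the right instincts, and they match the paper's approach in spirit. The paper, however, implements the first regime via a cleaner device: it labels a vertex \texttt{new} if it has seen no arrival since its last update, and splits $\eta_\kappa(k)=\sigma(k,\texttt{old})+\sigma(k,\texttt{new})$. Conditional on being \texttt{new}, the partner is \emph{exactly} an independent draw from the \texttt{new}-marginal, so the squaring step $\sigma(k,\texttt{new})\lesssim 2\lambda\,\sigma(k-1,\texttt{new})^2$ comes out with no correction term at all; the entire ``correlation buildup'' you try to bound is absorbed into $\sigma(k-1,\texttt{old})\le\frac{1+2\lambda}{2\kappa}$, which is dominated precisely while $\sigma(k-1,\texttt{new})^2\gtrsim 1/\kappa$. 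That defines $R$ via $\sigma(R,\texttt{new})\le(2\kappa)^{-1/2}$, not via $p_R\sim\lambda/\kappa$ as you propose (both give $R=\Theta(\log\log\kappa)$, but note the crossover matches $(3\lambda/\kappa)^{R/2}$ only at the paper's threshold). Your version can be made to work, but controlling $\p(Q\ge k,Q'\ge k)-p_k^2$ directly is messier than the \texttt{new}/\texttt{old} split.

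The genuine gap is in the regime $k>R$. You write that ``each extra level the partner must reach costs a factor $O(\lambda/\kappa)$'' and then conclude a per-level factor of $\sqrt{3\lambda/\kappa}$; these are not the same statement, and you give no mechanism connecting them. The paper's reason for the square root is a \emph{two-step} recursion: working in the partially observed chain that skips \texttt{old} intervals, a transition from level $k-i$ into $[k,\infty)$ with a freshly drawn partner at level $k-i-j$ requires at least $2i-1+j$ arrivals to land before the next update, each costing $\lambda/\kappa$. After summing over $i,j$ (and handling separately the contribution from drawing a partner already $\ge k$, which feeds $\tilde\sigma(k)$ back into the right-hand side and is absorbed using $\tilde\sigma(k-1)=O(\kappa^{-1/2})$), the dominant surviving term is $i=2$, giving $\tilde\sigma(k)\lesssim(\lambda/\kappa)\,\tilde\sigma(k-2)$. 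That is where the exponent $k/2$ comes from. Your sketch contains neither the $2i-1$ count nor the two-step structure, so as written the second regime does not go through.
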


We do not claim the bound for every $R=\Theta\left(\log \log \kappa \right)$, just that there is some changepoint $R$ defined by \eqref{eq_R_bound} for which there is this bound.

\begin{proof}
\textbf{1.} We use induction. 
Recall in \cite[Equation 3]{turner98} we see the stationary queue distribution when $\kappa=\infty$ has expression
\[
\eta_\infty([k,\infty))=
\lambda^{2^k-1}
\]
which is the solution of equating rates between $k-1$ and $k$
\begin{equation}\label{eq_meanfield}
\lambda\left(
\eta_\infty([k-1,\infty))^2
-
\eta_\infty([k,\infty))^2
\right)
=
\eta_\infty(k).
\end{equation}

In the finite speed version which is our model, call a vertex ${\tt new}$ if it has recently changed neighbour, such that it has not seen an arrival to interact with its neighbour. Otherwise, the vertex is ${\tt old}$. We then split the stationary distribution
\begin{equation}
\label{eq_splitting}
\eta_\kappa(k)=\sigma(k,{\tt old})
+
\sigma(k,{\tt new})
\end{equation}
for every $k\geq 0$. We will use this for a similar argument to \eqref{eq_meanfield} of equating rates at stationarity.

Because any stationary new vertex is adjacent to a stationary other new vertex, we upper bound the ergodic rate of increasing from $k-1$ to $k$ in our model by the left side of the following inequality
\begin{multline*}
2\lambda\sigma(k-1,{\tt old})
+
\lambda
\sigma(k-1,{\tt new})
\left(
\sigma(k-1,{\tt new})
+
2\sigma([k,\infty),{\tt new})
\right)\\
\geq
\sigma(k,{\tt old})
+
\sigma(k,{\tt new})
\geq
\sigma(k,{\tt new}).
\end{multline*}

Regardless of the queue length we have a simple Markov transition between ${\tt new}$ and ${\tt old}$
\[
(1+2\lambda)
\sigma(\mathbb{N},{\tt new})
=
2\kappa
\sigma(\mathbb{N},{\tt old})
\implies
\sigma(k-1,{\tt old})
\leq
\frac{1+2\lambda}{2\kappa}
\]
so we can insert this
\[
\frac{\lambda}{\kappa}(1+2\lambda)
+
\lambda
\sigma(k-1,{\tt new})
\left(
\sigma(k-1,{\tt new})
+
2\sigma([k,\infty),{\tt new})
\right)
\geq
\sigma(k,{\tt new})
\]
which implies
\[
2\lambda
\sigma(k-1,{\tt new})
\left(
\sigma(k-1,{\tt new})
+
2\sigma([k,\infty),{\tt new})
\right)
\geq
\sigma(k,{\tt new}),
\]
the same relation as \eqref{eq_meanfield} with a doubled task arrival parameter, if
\[
\sigma(k-1,{\tt new})^2
\geq
\frac{1+2\lambda}{2\kappa}.
\]

In this way we can stop the induction below that level, at some $R=\Theta(\log\log \kappa)$ with
\begin{equation}
\label{eq_R_bound}
\sigma(R,{\tt new})\leq \frac{1}{\sqrt{2\kappa}}.
\end{equation}

At every step the transition upwards is more than \eqref{eq_meanfield}, so the conclusion is a stochastic domination as claimed. 
%Beyond this point $R$, we consider the crossings of an interval $[k,k+2]$. Servings are at rate $1$, and updates have no effect on the length of a particular queue, so we multiply that rate $1$ by the probability to serve again without an arrival: downward crossings occur at rate \emph{at least}
%\[
%\sigma(k+2,{\tt new})
%\frac{1}{2\lambda+1}.
%\]
An old state becomes new by updating, and so
\begin{equation}\label{eq_refreshing}
(1+2\lambda)
\sigma(k,{\tt new})
\geq
2\kappa
\sigma(k,{\tt old})
\end{equation}
this is not detailed balance (the chain is not reversible), instead the left-hand side is the total rate of leaving $(k,{\tt new})$. By recalling \eqref{eq_splitting} we can translate the bound back to $\eta_\kappa$ and lose at most the claimed $2$ factor.

\textbf{2.} For the rest of the proof, think of this as just a Markov process with stationary distribution $\tilde{\sigma}(k)\propto \sigma(k,{\tt new})$: skip forwards through time intervals of ${\tt old}$ to create the \emph{partially observed} chain \cite[Definition 3.3]{fernley2022discursive}.

We argue first, by switching to the random walk upper bound as soon as the queue length hits $k$, to find, for $\lambda<\nicefrac{1}{2}$, stochastic domination $\eta_\kappa \preceq \rho$ with
\[
\rho(i)=
\begin{cases}
\eta_\kappa(i), &i \leq k,\\
\eta_\kappa(k)(1-2\lambda)(2\lambda)^{i-k}, &i > k.\\
\end{cases}
\]

Hence $\eta_\kappa([k,\infty))\leq (1+2\lambda) \eta_\kappa(k)$ independently of the inductive argument to follow. Moreover by \eqref{eq_refreshing}
\[
%\tilde{\sigma}([k,\infty)) \leq
%\frac{1+2\lambda+2\kappa}{2\kappa}
\eta_\kappa([k,\infty))
\leq
 (1+2\lambda) \tilde{\sigma}(k) \left(
 1+\frac{1+2\lambda}{2\kappa}
 \right).
\]

We now consider the rates from $k-i$ into $[k,\infty)$. For this process a queue increases by $i\geq 1$ with a neighbour of length $k-i-j$ (for $j\geq 0$) using at least $2i-1+j$ arrivals. This transition occurs then at rate upper bounded by
\[
(2+2\lambda) \cdot
\eta_\kappa(k-i-j)
\cdot
\left(
\frac{\lambda}{\kappa}
\right)^{2i-1+j}.
\]

Moreover, we can increase by sampling a strictly larger queue, at rate loosely bounded by
\[
\eta_\kappa([k-i+1,\infty))
\left(
2\lambda \cdot
\left(
\frac{\lambda}{\kappa}
\right)^{i-1}
+
2 \cdot
\left(
\frac{\lambda}{\kappa}
\right)^{i}
\right)
\]
because starting a path with a serving necessitates at least $1$ extra arriving task. 
In the other direction, we leave the set $[k,\infty)$ with rate at least
$
\tilde{\sigma}(k)
$. Therefore we must have
%\begin{multline*}
%\tilde{\sigma}(k)
%\leq
%\sum_{i= 1}^k
%\tilde{\sigma}(k-i)
%\Bigg[
%(1+2\lambda)
%\eta_\kappa(k-i+1)
%\left(
%2\lambda 
%\left(
%\frac{\lambda}{\kappa}
%\right)^{i-1}
%+
%2 
%\left(
%\frac{\lambda}{\kappa}
%\right)^{i}
%\right)\\
%+
%\sum_{j\geq 0}
%(2+2\lambda) 
%\eta_\kappa(k-i-j)
%\left(
%\frac{\lambda}{\kappa}
%\right)^{2i-1+j}
%\Bigg]
%\end{multline*}
\begin{multline*}
\tilde{\sigma}(k)
\leq
\sum_{i= 1}^k
\tilde{\sigma}(k-i)
\Bigg[
\eta_\kappa([k-i+1,\infty))
(2+2\lambda) 
\left(
\frac{\lambda}{\kappa}
\right)^{i-1}\\
+
\sum_{j\geq 0}
(2+2\lambda) 
\eta_\kappa(k-i-j)
\left(
\frac{\lambda}{\kappa}
\right)^{2i-1+j}
\Bigg].
\end{multline*}

Sum over $j$, and loosely bound $\eta_\kappa\leq 1$ to simplify, except in the extracted $\tilde{\sigma}(k)$ term on the right-hand side
\begin{multline*}
\tilde{\sigma}(k)
\leq
(2+2\lambda) 
\tilde{\sigma}(k-1)
(1+2\lambda) \tilde{\sigma}(k) \left(
 1+\frac{1+2\lambda}{2\kappa}
 \right)\\
+
\sum_{i= 2}^k
(2+2\lambda) 
\tilde{\sigma}(k-i)
\left(
\frac{\lambda}{\kappa}
\right)^{i-1}
+
\sum_{i= 1}^k
(2+2\lambda) 
\tilde{\sigma}(k-i)
\frac{(\nicefrac{\lambda}{\kappa})^{2i-1}}{1-\nicefrac{\lambda}{\kappa}}.
\end{multline*}

Recall now \eqref{eq_R_bound}
\[
\tilde{\sigma}(R)= \sigma(R,{\tt new})\frac{1+2\lambda+2\kappa}{2\kappa}
%\leq \frac{1+2\lambda+2\kappa}{(2\kappa)^{\nicefrac{3}{2}}}
=O\left(\frac{1}{\sqrt{\kappa}}\right)
\]
and so as $\lambda\rightarrow 0$, $\kappa\rightarrow\infty$ we have uniformly over $k\geq R+1$
\[
\begin{split}
\left(
1-O\left(\frac{1}{\sqrt{\kappa}}\right)
\right)
\tilde{\sigma}(k)
&\leq
\sum_{i= 2}^k
(2+2\lambda) 
\tilde{\sigma}(k-i)
\left(
\frac{\lambda}{\kappa}
\right)^{i-1}
+
\sum_{i= 1}^k
(2+2\lambda) 
\tilde{\sigma}(k-i)
\frac{(\nicefrac{\lambda}{\kappa})^{2i-1}}{1-\nicefrac{\lambda}{\kappa}}\\
&\leq
\left(
2+O\left(\lambda\right)
\right)
\left(
\tilde{\sigma}(k-1)
\frac{\lambda}{\kappa}
+
\sum_{i= 2}^k
\tilde{\sigma}(k-i)
\left(
\frac{\lambda}{\kappa}
\right)^{i-1}
\right)
\lesssim
\frac{2\lambda}{\kappa}
\tilde{\sigma}(k-2).
\\
\end{split}
\]

This inductively proves the claimed decay, the bound applies to the ${\tt old}$ states as well by \eqref{eq_refreshing}, and to the stationary mass of interest which is the sum of ${\tt new}$ and ${\tt old}$.
\end{proof}

\begin{lemma}\label{lem_tv}
In the stationary queue system
\[
\de_{\rm TV}(\pi, \eta_\kappa)\leq
\frac{1}{n}
.
\]
\end{lemma}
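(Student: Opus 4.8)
The plan is to exploit Proposition \ref{prop_indep} (equivalently \ref{prop_general_indep}) in the case $x=2$, which already shows that the law of a pair of queues at stationarity is within $O(1/n)$ of a product $\eta_\kappa^{\otimes 2}$ in total variation. The key observation is that total variation between $\pi$ and its $n\to\infty$ limit $\eta_\kappa$ can be bounded by a coupling argument built on the exploration of the information set $\cH^{(1)}$ from Section \ref{sec_dependence}: $\pi$ is the law of $Q^{(1)}$ obtained by running the backwards exploration in the finite system, while $\eta_\kappa$ is the same functional run in the $n=\infty$ system where every newly discovered vertex carries a fresh label. The two explorations can be coupled to agree — and hence produce identically distributed queue lengths — on the event that the finite exploration never revisits a label, i.e. never has a ``label clash''. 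On that event $Q^{(1)}$ has exactly law $\eta_\kappa$, so $\de_{\rm TV}(\pi,\eta_\kappa)$ is at most the probability of a clash during a single subcritical exploration.

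First I would set up the coupling precisely: reveal the graphical construction (shared arrivals, the zero-sets $\cO^{(i)}$) and the dynamic-graph history simultaneously for both the finite and infinite systems, matching each new hyperedge the finite exploration meets with a fresh hyperedge in the infinite one, as in the proof of Proposition \ref{prop_general_indep}. As long as no vertex already in $\cH^{(1)}$ is re-selected, the two explorations are isomorphic and $Q^{(1)}$ takes the same value in both. Second, I would bound the clash probability: conditionally on $|\cH^{(1)}|=h$, each fresh vertex selection lands among the already-explored vertices with probability at most $h/n$ (here using $r=d=1$, so $m=2$, by the stochastic-inclusion reduction of Proposition \ref{prop_domination}, or carrying the general $m$ through), and there are at most $h-1$ such branchings, so the clash probability is $O(\e(|\cH^{(1)}|^2)/n)$. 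Third, I would invoke Corollary \ref{cor_h_squared}, $\e(|\cH^{(1)}|^2)\le 2$, to get $\de_{\rm TV}(\pi,\eta_\kappa)=O(1/n)$.

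To get the clean constant $1/n$ rather than $O(1/n)$, I would be slightly more careful in the third step: since $\e(|\cH^{(1)}|^2)\le 2$ and, more to the point, the number of branching events where a clash can occur is $|\cH^{(1)}|-1$, one can write the clash probability as at most $\e\big((|\cH^{(1)}|-1)|\cH^{(1)}|\big)/n$ and use Proposition \ref{prop_max_cpt} to see that for $\lambda$ sufficiently small this expectation is at most $1$ (it is $\le \sum_{k\ge2} k(k-1)\p(|\cH^{(1)}|\ge k)$-type bound, dominated by the $k=2$ term $2(4m+1)(18m^2\lambda)$, which is $<1$ once $\lambda$ is small). The main obstacle is not any single estimate but getting the coupling bookkeeping exactly right — ensuring the finite and infinite explorations really are measurable functions of a common source of randomness, and that ``no label clash'' is genuinely the event on which they coincide, including the subtlety that a clash must be counted at the moment a new half-edge selects its swap partner, not merely when a vertex is re-encountered. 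This is the same construction already used for Proposition \ref{prop_general_indep}, so the work is in specializing it to $x=1$ and tracking the constant.
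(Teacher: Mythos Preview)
Your proposal is correct and follows essentially the same route as the paper: couple the finite-$n$ backwards exploration of $\cH^{(1)}$ with its $n=\infty$ version, observe that the two produce the same queue length on the event of no label clash, and bound the clash probability by a second-moment estimate on $|\cH^{(1)}|$ via Corollary \ref{cor_h_squared}. The paper obtains the exact constant $1/n$ slightly more directly by writing the expected number of clashes as at most $|\cH^{(i)}|^2/(2n)$ (the $\tfrac12$ coming from unordered pairs), taking expectation, and applying $\e(|\cH^{(i)}|^2)\le 2$ with Markov's inequality; your route via $\e\big(|\cH^{(1)}|(|\cH^{(1)}|-1)\big)\le 1$ reaches the same place. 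Your opening sentence about invoking Proposition \ref{prop_indep} with $x=2$ is a distraction---the argument you actually carry out is the $x=1$ specialization, which is exactly what is needed here.
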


\begin{proof}
Explore the finite $n$ graphical construction of Section \ref{sec_dependence} to locally construct the $n=\infty$ version. As long as every sampled label after a graph move is new to the construction, we can consistently do this. 
Note also the distributions $\pi$ and $\eta_\kappa$ are for a single queue, which by exchangeability could be any $i \in [n]$.

Labelling $\cH^{(i)}$ iteratively, we expect at most $\nicefrac{|\cH^{(i)}|^2}{2n}$ clashes and so by Markov's inequality and Corollary \ref{cor_h_squared} we have none with probability $1-\nicefrac{1}{n}$.
\end{proof}

The previous result showed that the stationary distribution of a queue in the system is approximated by its mean-field version. In the next, we instead approximate the empirical distribution of all $n$ queues sampled together.

\begin{lemma}\label{lem_moment_empirical}
In the stationary queue system for any $a\geq 1$
\[
\mathbb{E}\left[\left(
\sum_{x=0}^\infty \left|
\frac{1}{n}\sum_{i=1}^n \1_{Q^{(i)}=x}
-\eta_\kappa(x)
\right|\right)^{a}\right]
\leq
\frac{\log^{3a} n}{n^{\nicefrac{a}{8}}}
\]
when $n$ is taken sufficiently large (depending on $a$).
\end{lemma}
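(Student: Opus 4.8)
The plan is to bound the $L^a$ norm of the total-variation deviation $\Delta:=\sum_{x\ge 0}\bigl|\tfrac{1}{n}\sum_i \1_{Q^{(i)}=x}-\eta_\kappa(x)\bigr|$ by splitting the sum over $x$ at a cutoff of order $\log n$ and treating the head and the tail separately. For the tail $x>x_0$ with $x_0=C_0\log n$, Remark \ref{rem_random_walk} gives the pointwise domination $\eta_\kappa\preceq g$ and the queue marginals $\pi\preceq g$, so both $\eta_\kappa([x_0,\infty))$ and $\e\bigl[\tfrac1n\sum_i\1_{Q^{(i)}\ge x_0}\bigr]=\pi([x_0,\infty))$ are at most $(2\lambda)^{x_0}=n^{-C_0\log(1/2\lambda)}$, which for $C_0$ large enough is $o(n^{-a})$ even after taking an $a$-th power; the contribution of the tail to $\e[\Delta^a]$ is then negligible. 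So the real work is the head: $\Delta_0:=\sum_{x=0}^{x_0}\bigl|\tfrac1n\sum_i\1_{Q^{(i)}=x}-\eta_\kappa(x)\bigr|$, a sum of at most $x_0+1=O(\log n)$ terms.

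For each fixed level $x$, I want to show the empirical frequency $\widehat p_x:=\tfrac1n\sum_i\1_{Q^{(i)}=x}$ concentrates around its mean $\pi(x)$, and that $\pi(x)$ is within $1/n$ of $\eta_\kappa(x)$ by Lemma \ref{lem_tv} (summing that bound over $x\le x_0$ costs only $O(\log n/n)$, which is dominated by the target). The concentration of $\widehat p_x$ around $\pi(x)$ is where the dependence structure of Section \ref{sec_dependence} enters: the indicators $\1_{Q^{(i)}=x}$ are not independent, but by Proposition \ref{prop_general_indep} (equivalently the information-set argument) $\1_{Q^{(i)}=x}$ and $\1_{Q^{(j)}=x}$ are conditionally independent (given the graph history) whenever the backward explorations $\cH^{(i)},\cH^{(j)}$ are disjoint, and by Corollary \ref{cor_h_squared} each $\cH^{(i)}$ has $\e|\cH^{(i)}|^2\le 2$, so the typical explored set is $O(1)$ and label-clashes are rare. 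The clean way to exploit this is a bounded-difference / martingale argument: condition on the full graph history, reveal the $\cO^{(i)}$ (past zero-hitting sets) and Poisson arrivals one block at a time, and note that changing the randomness attached to a single vertex alters $\widehat p_x$ by at most $\tfrac1n\cdot(\text{size of the information sets that vertex touches})$; since these sizes have exponential tails uniformly (Proposition \ref{prop_max_cpt}), one gets a sub-Gaussian-type (or at least sub-exponential) concentration of $\widehat p_x$ at scale $n^{-1/2}\,\mathrm{polylog}(n)$ around $\pi(x)$, with $a$-th moment of the deviation bounded by $(\mathrm{polylog}\,n)^a/n^{a/2}$.

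Putting the pieces together: $\e[\Delta_0^a]\le (x_0+1)^{a-1}\sum_{x=0}^{x_0}\e\bigl|\widehat p_x-\eta_\kappa(x)\bigr|^a$ by the power-mean (or $\ell^1$-to-$\ell^a$) inequality, and each summand is $\le \e|\widehat p_x-\pi(x)|^a+|\pi(x)-\eta_\kappa(x)|^a \le (\mathrm{polylog}\,n)^a n^{-a/2}+n^{-a}$. Hence $\e[\Delta_0^a]\lesssim (\log n)^{a}\cdot (\log n)\cdot (\mathrm{polylog}\,n)^a n^{-a/2}$, which is comfortably below $\log^{3a} n\,/\,n^{a/8}$ once $n$ is large relative to $a$ (the exponent $a/8$ rather than $a/2$ and the crude $\log^{3a}n$ leave ample room to absorb the number-of-levels factor, the union over $x$, and any loss in the bounded-difference constants). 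Combining with the $o(n^{-a})$ tail bound gives the claim.

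The main obstacle is the concentration step for a single level $\widehat p_x$: one must carefully set up the exposure martingale over the graphical construction so that the per-vertex increment is genuinely controlled by the (exponentially-tailed, but unbounded) information-set sizes, and then convert those heavy-but-light tails into a usable $a$-th moment bound — the unboundedness means a naive Azuma estimate does not apply directly, and one instead wants a Freedman- or Burkholder–Davis–Gundy-type argument, or a truncation of $|\cH^{(i)}|$ at $O(\log n)$ (valid with probability $1-n^{-\omega(1)}$ by Proposition \ref{prop_max_cpt}) followed by bounded differences on the truncated process. Everything else — the tail split, Lemma \ref{lem_tv}, the $\ell^1$-to-$\ell^a$ passage, and checking the final arithmetic of exponents — is routine.
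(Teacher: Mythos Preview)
Your high-level skeleton---split the sum at $x_0=\Theta(\log n)$, kill the tail with the random-walk domination of Remark~\ref{rem_random_walk}, then reduce to per-level concentration of $\widehat p_x$ for $O(\log n)$ levels---is exactly the paper's. The divergence is in the concentration step. The paper does \emph{not} run a bounded-difference martingale; it fixes an even exponent $8k$ (with $k=\lceil a/8\rceil$, recovering general $a$ at the end by monotonicity of $L^p$ norms), expands $\e\bigl[(\sum_i X_i)^{8k}\bigr]$ with $X_i:=\1_{Q^{(i)}=x}-\eta_\kappa(x)$, and classifies the $n^{8k}$ monomials by how many indices appear with multiplicity one. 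Monomials with few singletons are scarce (at most $O(n^{4k+2r})$ for a parameter $r$); for the rest one explores the $\cH$-sets of the $8k$ indices, and Proposition~\ref{prop_max_cpt} guarantees that except on an event of probability $O(n^{-2r})$ at least $2r$ singleton indices are \emph{separated}---their $\cH$-set misses every other index---whence each such factor has $|\e X_i|=O(n^{-1}\log^2 n)$ via the coupling behind Lemma~\ref{lem_tv}. Multiplying and setting $r=k$ gives the bound. So the information sets enter as a tool to manufacture conditionally independent, nearly centred factors inside each monomial, not as a Lipschitz bound.

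Your route could in principle yield a sharper exponent ($n^{-a/2}$ versus $n^{-a/8}$), but there is a gap beyond the unbounded-difference issue you already flag. If you condition on the full graph history and run McDiarmid over the per-vertex Poisson data, the concentration you obtain is of $\widehat p_x$ about its \emph{conditional} mean $\e[\widehat p_x\mid G_{(-\infty,0]}]$, not about $\pi(x)$; you then owe a second argument that this random conditional mean is itself within $n^{-1/2+o(1)}$ of $\pi(x)$, which is another concentration problem (now a functional of the graph history alone) that your sketch does not address. The paper's moment expansion sidesteps this because it computes unconditional expectations of products directly, and the separation event decouples graph-dependence and queue-dependence in one stroke. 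Note also that the Lipschitz constant you write, $\tfrac1n\,|\{i:j\in\cH^{(i)}\}|$, is the size of the \emph{dual} set rather than $|\cH^{(j)}|$; it should have exponential tails too, but this is not literally Corollary~\ref{cor_h_squared} and would need its own line.
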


\begin{proof}
Fix some natural number $k$, later this will become $k=\lceil\nicefrac{a}{8}\rceil$. 
We split for some sufficiently large $C>0$
\[
\begin{split}
&\mathbb{E}\left[\left(
\sum_{x=0}^{\infty} \left|
\frac{1}{n}\sum_{i=1}^n \1_{Q^{(i)}=x}
-\eta_\kappa(x)
\right|
\right)^{8k}\right]\\
&\leq
2^{4k}\mathbb{E}\left[\left(
\sum_{x=0}^{\lfloor C \log n \rfloor} \left|
\frac{1}{n}\sum_{i=1}^n \1_{Q^{(i)}=x}
-\eta_\kappa(x)
\right|
\right)^{8k}
+
\left(
\sum_{x=\lceil C \log n \rceil}^{\infty} \left|
\frac{1}{n}\sum_{i=1}^n \1_{Q^{(i)}=x}
-\eta_\kappa(x)
\right|
\right)^{8k}\right]
\end{split}
\]
because the binomial expansion has $2^{8k}$ terms. 
For the tail we know of course
\[
\sum_{x=\lceil C \log n \rceil}^{\infty}
\left(
\frac{1}{n}\sum_{i=1}^n \1_{Q^{(i)}=x}
+\eta_\kappa(x)
\right)\leq 2
\]
and hence using Remark \ref{rem_random_walk}
\[
\begin{split}
\mathbb{E}\left[\left(
\sum_{x=\lceil C \log n \rceil}^{\infty} \left|
\frac{1}{n}\sum_{i=1}^n \1_{Q^{(i)}=x}
-\eta_\kappa(x)
\right|
\right)^{8k}\right]
&\leq
2^{8k-1}\mathbb{E}\left(
\sum_{x=\lceil C \log n \rceil}^{\infty}
\frac{1}{n}\sum_{i=1}^n \1_{Q^{(i)}=x}
+\eta_\kappa(x)
\right)\\
&=
2^{8k-1}
\left(
\pi\left(
[C \log n,\infty)
\right)
+
\eta_\kappa\left(
[C \log n,\infty)
\right)
\right)\\
&\leq 
2^{8k}
(2\lambda)^{C \log n}
=
2^{8k}
n^{-C \log \frac{1}{2\lambda}}
.
\end{split}
\]

For the rest, we bound
\begin{equation}\label{eq_one_term}
\left(
\sum_{x=0}^{\lfloor C \log n \rfloor} \left|
\frac{1}{n}\sum_{i=1}^n \1_{Q^{(i)}=x}
-\eta_\kappa(x)
\right|
\right)^{8k}
\leq
C^{8k} \log^{8k} n 
\max_{x \leq C \log n}
 \left(
\frac{1}{n}\sum_{i=1}^n \1_{Q^{(i)}=x}-\eta_\kappa(x)
\right)^{8k}
\end{equation}
and so for polynomial decay we only need to control the moment of a particular queue length $x\leq C \log n$. 

Now if we write $X_i:=\1_{Q^{(i)}=x}-\eta_\kappa(x)$, expanding $(\sum_{i=1}^n X_i)^{8k}$ has $n^{8k}$ terms. Most of these terms, given $k$ is constant, have all distinct factors. We control the proportion that have at most $4r$ factors with exponent $1$, for some $r < 2k$ to be optimised later. Such a term with $s$ distinct factors must have
\[
4r+2(s-4r) \leq 8k
\iff
s \leq 4k+2r
\]
and so there are at most
\[
\binom{n}{4k+2r}(4k+2r)^{8k}=
O\left(
n^{4k+2r}
\right)
\]
such terms. Bound the expectation of the modulus of these terms by $1$.

For the other terms, we explore $\cH$ and check if these factors with exponent $1$ are \emph{separated} in that they find no other labels in their $\cH$ component corresponding to any of the the $8k-1$ other factors. Specifically, we control that at least $2r$ of the $4r$ factors with exponent $1$ are separated.

This must follow if there at most $r$ label clashes, which fails with probability bounded by
\begin{multline*}
\p\left(
\max_i \cH^{(i)} \geq z
\right)
+
\p\left(
{\rm Pois}
\left(
\frac{(8kz)^2}{n-8kz}
\right)
\geq r+1
\right)\\
\lesssim
8k(72\lambda)^{z-1}
+
\frac{1}{(r+1)!}
\left(
\frac{(8kz)^2}{n-8kz}
\right)^{r+1}
=O\left(
n^{-2r}
\right)
\end{multline*}
by using Proposition \ref{prop_max_cpt} and setting $z=2r \log n$. 
Similarly for any $i \in [n]$ we have
\[
\p(\{i\text{ separated}\}^\complement)
\lesssim
8k(72\lambda)^{z-1}
+
\frac{8kz^2}{n-8kz}
=O\left(
\frac{\log^2 n}{n}
\right).
\]

Now conditionally on the event that they are separated, the queues determining these $r$ singleton factors are independent. Further
\[
\begin{split}
\pi(x)-\eta_\kappa(x)&=\p(\{i\text{ separated}\})\e(X_i|\{i\text{ separated}\})
 +\p(\{i\text{ separated}\}^\complement)\e(X_i|\{i\text{ separated}\}^\complement)
\end{split}
\]
which together with $|X_i|\leq 1$ and Lemma \ref{lem_tv} guarantees
\[
\left|
\e(X_i|\{i\text{ separated}\})
\right|
\leq
\frac{\p(\{i\text{ separated}\}^\complement)+\frac{1}{n}}{\p(\{i\text{ separated}\})}=O\left(\frac{\log^2 n}{n}\right).
\]

In this way $2r$ separated factors have expectation $O\left(\frac{\log^{4r} n}{n^{2r}}\right)$: we find below
\[
\begin{split}
\e\left[
 \left(
\frac{1}{n}\sum_{i=1}^n \1_{Q^{(i)}=x}-\eta_\kappa(x)
\right)^{8k}
\right]
&\leq
\underbrace{
\frac{1}{n^{8k}}
O\left(
n^{4k+2r}
\right)
+O\left(\frac{\log^{4r} n}{n^{2r}}\right)
}_{\text{the two types of term}}
+
\underbrace{
O\left(\frac{1}{n^{2r}}\right)
}_{\text{the error event}}\\
&=O\left(\frac{1}{n^{4k-2r}} + \frac{\log^{4r} n}{n^{2r}}\right).
\end{split}
\]

So, we set $r=k$ to obtain the bound
\[
\mathbb{E}\left[\left(
\sum_{x=0}^\infty \left|
\frac{1}{n}\sum_{i=1}^n \1_{Q^{(i)}=x}
-\eta_\kappa(x)
\right|\right)^{8k}\right]
\leq\frac{\log^{12k+1} n}{n^k},
\]
recalling the $\log^{8k} n$ factor due to \eqref{eq_one_term}.

Finally, because the $L^p$ norms are ordered we can take $1\leq a \leq 8k$ and deduce
\[
\mathbb{E}\left[\left(
\sum_{x=0}^\infty \left|
\frac{1}{n}\sum_{i=1}^n \1_{Q^{(i)}=x}
-\eta_\kappa(x)
\right|\right)^{a}\right]
\leq
\left(
\frac{\log^{12k+1} n}{n^k}
\right)^{\frac{a}{8k}}
\leq
\frac{\log^{3a} n}{n^{\nicefrac{a}{8}}}.
\]
\end{proof}

This bound on the moment of the distance can be translated to a bound on the tail, to be able to understand a sample drawn from the empirical distribution. 

\begin{corollary}\label{cor_concentration}
For the stationary system and any $x>0$
\[
\p\left(\de_{\rm TV}
\left(
\frac{1}{n}\sum_{i=1}^n \delta_{Q^{(i)}}
,\eta_\kappa
\right)>n^{\nicefrac{-1}{9}}
\right)=O\left(
n^{-x}
\right).
\]
\end{corollary}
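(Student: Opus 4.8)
The plan is to convert the moment bound of Lemma~\ref{lem_moment_empirical} into a tail bound by Markov's inequality at a well-chosen moment order. Write $D:=\de_{\rm TV}\!\left(\frac1n\sum_{i=1}^n\delta_{Q^{(i)}},\eta_\kappa\right)=\tfrac12\sum_{x\geq0}\bigl|\tfrac1n\sum_i\1_{Q^{(i)}=x}-\eta_\kappa(x)\bigr|$, so that $D$ is (half of) exactly the random variable whose $a$-th moment is controlled in Lemma~\ref{lem_moment_empirical}. Markov's inequality gives, for any admissible $a\geq1$,
\[
\p\bigl(D>n^{-1/9}\bigr)
\leq
\frac{\e[(2D)^{a}]}{(2n^{-1/9})^{a}}
\leq
\frac{2^{-a}\log^{3a}n}{n^{a/8}}\cdot 2^{a} n^{a/9}
=
\frac{\log^{3a}n}{n^{a/72}},
\]
where I used $\tfrac18-\tfrac19=\tfrac1{72}>0$. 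This already decays like a negative power of $n$ times a polylogarithmic factor.

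To upgrade this to the claimed $O(n^{-x})$ for an arbitrary fixed $x>0$, simply choose $a$ large depending on $x$: since $\log^{3a}n = n^{o(1)}$, for any fixed $a$ we have $\log^{3a}n/n^{a/72}=O(n^{-(a/72-\epsilon)})$ for every $\epsilon>0$, so picking $a$ with $a/72 > x$ (e.g.\ $a=\lceil 73x\rceil$, subject also to $a\geq 8$ so that Lemma~\ref{lem_moment_empirical} applies with $n$ large enough depending on $a$, hence on $x$) yields $\p(D>n^{-1/9})=O(n^{-x})$. One should remark that the implied constant and the threshold ``$n$ sufficiently large'' now depend on $x$, which is consistent with the statement.

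I do not anticipate any real obstacle here: the only points to be careful about are (i) the factor-of-two between the $\ell^1$ distance appearing in Lemma~\ref{lem_moment_empirical} and the total variation distance, which is harmless, and (ii) ensuring the chosen moment order $a$ is an integer (or at least $\geq1$) within the range for which Lemma~\ref{lem_moment_empirical} was proved --- the lemma is stated for all $a\geq1$ with $n$ large depending on $a$, so this is immediate. The slack between the exponent $\tfrac1{72}$ produced by the argument and the exponent $\tfrac19$ appearing in the event $\{D>n^{-1/9}\}$ is comfortable, so no optimisation is needed; if one wanted a cleaner constant one could instead threshold at $n^{-1/8+\delta}$ for small $\delta$, but $n^{-1/9}$ is already well within reach.
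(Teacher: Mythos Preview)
Your proof is correct and follows essentially the same route as the paper: apply Markov's inequality with the $a$-th moment bound of Lemma~\ref{lem_moment_empirical}, obtain $\log^{3a}n/n^{a/72}$, and take $a$ large depending on $x$. Your extra care with the factor of two between the $\ell^1$ sum and $\de_{\rm TV}$ is harmless (and the paper simply ignores it); the side condition ``$a\geq 8$'' you impose is not actually needed since the lemma is stated for all $a\geq 1$.
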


\begin{proof}
We insert Lemma \ref{lem_moment_empirical} into Markov's inequality to find
\[
\begin{split}
\p\left(\de_{\rm TV}
\left(
\frac{1}{n}\sum_{i=1}^n \delta_{Q^{(i)}}
,\eta_\kappa
\right)>n^{\nicefrac{-1}{9}}
\right)
&\leq
\frac{
\e\left(
\de_{\rm TV}
\left(
\frac{1}{n}\sum_{i=1}^n \delta_{Q^{(i)}}
,\eta_\kappa
\right)^{a}
\right)
}{n^{\nicefrac{-a}{9}}}\\
&\leq
\frac{\log^{3a} n}{n^{\nicefrac{a}{72}}}
\end{split}
\]
and so we have the conclusion by taking $a$ sufficiently large.
\end{proof}

\begin{remark}
Large deviations for the empirical distribution can be trivialised by the assumption of small $\lambda$ and the random walk upper bound in Remark \ref{rem_random_walk}, but only beyond a minimal distance. That is to say, by considering the $0$ length queues and Hoeffding's inequality, for some binomial $B\sim{\rm Bin}(n,1-2\lambda)$
\begin{multline*}
\p\left(
\sum_{x=0}^{\infty} \left|
\frac{1}{n}\sum_{i=1}^n \1_{Q^{(i)}=x}
-\eta_\kappa(x)
\right|
>4\lambda+\epsilon
\right)\\
\leq
\p\left(
\left|
\frac{B}{n}-1+2\lambda
\right|
+
\left|
\frac{B}{n}-1
\right|
-2\lambda>\epsilon
\right)
\leq
3e^{-n\epsilon^2/2.
}
\end{multline*}

A real large deviation result for deviations smaller than $4\lambda$ might be achieved with a reworking of the previous lemma such as to take the sum for an exponential moment, but the details are complicated and it is not our focus here.
\end{remark}

We are now ready to prove the upper bound of the main theorem.

\begin{proof}[Proof of Proposition \ref{prop_general_queuelength}]
\textbf{1.} Proposition \ref{prop_general_queuelength} inherits a timescale $[0,n^C]$ from the statement of Theorem \ref{thm_queuelength} over which we must control the longest queue. Recall that the lower bound was demonstrated at this particular queue in Corollary \ref{cor_lower_bound}, and that the upper bound need only be proved for the case $r=d=1$ by Proposition \ref{prop_domination}.

For the upper bound, we first eliminate the extremes of $\kappa$:
\begin{enumerate}
\item with general $\kappa$ we can apply Remark \ref{rem_random_walk} to the stationary distribution to find by Markov's inequality that the queue length does not exceed $O_{\p}(\log n)$, which proves the theorem in the case $\kappa=O(1)$;
\item for large enough $A>0$ and $\kappa \geq n^{A}$, recall from the proof of Proposition \ref{prop_general_mixing} (or really from \cite{diaconis1981generating}) that the system mixes in time $O(n^{-A}\log n)$. Hence by \cite[Proposition 1.10(b)]{MR1071805} we can find a strong stationary time before time $n^{1-A}$ with superpolynomially decaying failure probability. Simultaneously the graph sees an arriving task in that period with probability asymptotic to $\lambda n^{2-A}$. 
 In this way, for $A$ large enough, the graph is found independently stationary between every arrival of a task, with high probability over the whole history $[0,n^C]$. So increasing the speed further has no effect at all on the queues in that high probability event.  \label{item_2} 
\end{enumerate}

\textbf{2.} For the remaining intermediate speeds we show that the maximum length of queue $1$ does not exceed the claimed length orders with probability $1-o(\nicefrac{1}{n})$, to then conclude by the union bound for the full system. These orders are:
\begin{equation}\label{eq_orders}
\begin{cases}
\Theta_{\p}\left(\log \log n\right), &\kappa\in\left(n^{\Omega\left(\nicefrac{1}{\log \log n}\right)},n^{A}\right),\\[5pt]
\Theta_{\p}\left(\frac{\log n}{\log \kappa}\right), &\kappa\in\left(\omega(1),n^{o\left(\nicefrac{1}{\log \log n}\right)}\right).
\end{cases}
\end{equation}

By setting $x$ sufficiently large in Corollary \ref{cor_concentration} we can treat each new neighbour at a vertex as an independent sample from $\eta_\kappa$, except that we must allow an error independently with probability $n^{\nicefrac{-1}{9}}$. All errors are reset by $\cO$ which waits a time with an exponential tail, as we saw in the proof of Proposition \ref{prop_max_cpt}. Hence the longest reset time is bounded with probability $1-o(\nicefrac{1}{n})$ by $A\log n$, for some large $A>0$, and up to that reset time in $\cO$ we sample at most at Poisson rate $4\kappa$ of neighbours that might be longer queues than if drawn from $\eta_\kappa$.

Each such error in sampling a neighbour can only produce as many errors in arrivals as there are arrivals before the neighbour is resampled: that is, ${\rm Geom}\left(\frac{4\kappa(1-\nicefrac{1}{n})}{2\lambda+4\kappa(1-\nicefrac{1}{n})}\right)\preceq{\rm Geom}\left(1-\frac{\lambda}{\kappa}\right)$ errors in arrivals. Most of these geometrics are $0$, so in fact we can thin the neighbours arriving at rate $4\kappa$ to \emph{relevant} neighbours arriving at rate $4\lambda$. This contains all the neighbours that see at least $1$ arrival while connected.

Thinning the sequence of $1+{\rm Pois}(4\lambda A \log n)$ relevant neighbours leaves at most $1+{\rm Pois}(4\lambda A n^{\nicefrac{-1}{9}} \log n)$ with erroneous length. 
We have for some $N\in\mathbb{N}$
\[
\p\left(
1+{\rm Pois}(4\lambda A n^{\nicefrac{-1}{9}} \log n)
\geq N
\right)
\leq
\p\left(
{\rm Pois}(n^{\nicefrac{-1}{10}})
\geq N-1
\right)
=O\left(
n^{\frac{1-N}{10}}
\right).
\]

So for some large constant $N$ we will see a bad neighbour no more than $N$ times between two times in $\cO_1$.

By the Chernoff bound, for some other large constant $A'>0$, a sum of $N$ variables $G_i\stackrel{\rm i.i.d.}{\sim}{\rm Geom}(\nicefrac{\kappa}{(\lambda+\kappa}))$ has
\[
\begin{split}
\p\left(
\sum_{i=1}^{N}
G_i \geq \frac{A' \log n}{\log \nicefrac{\kappa}{\lambda}}
\right)
&=
\p\left(
\left(
\nicefrac{\kappa}{\lambda}
\right)^{\sum_{i=1}^{N}
G_i}
\geq
e^{A' \log n}
\right)\\
&\leq n^{-A'}
\left(
\frac{\kappa}{\lambda}
\right)^{2N}
=n^{-\Omega(1)},
\end{split}
\]
for $A'$ sufficiently large (recall that if $\kappa$ is larger than some fixed polynomial exponent then we are already mixing the whole graph between every arrival, by item \ref{item_2} at the start of this proof). Note also that because these are all relevant neighbours, the geometric above has minimum value $1$. So, by the previous display, we can say that the number of errors never exceeds the order of \eqref{eq_orders}.

\textbf{3.} If we have controlled that the errors are negligible, it remains to argue the error-free queue lengths. 
When the drawn queue lengths of the neighbour are simply from $\eta_\kappa$, the length of queue $1$ will also have this distribution. We can apply Proposition \ref{prop_stat_upper_bound} to bound the stationary mass of $\eta_\kappa$ above the claimed orders \eqref{eq_orders} by an arbitrarily fast decaying polynomial. 

Then the total expected time over all queues and the polynomial timeframe $[0,n^C]$ is also polynomially small, and because the queue can only decrease by serving at exponential rate $1$ we have the same polynomially decaying bound on the hitting probability. %
\end{proof}

{\bf Acknowledgements.}

This research was supported by National Research, Development and Innovation Office grant KKP 137490, and by a CRiSM visitor grant.

\bibliography{queuebib}

\end{document}